\documentclass[11pt]{amsart}
\usepackage{amsmath, amsthm, mathabx,amscd, amsfonts, amssymb, hyperref, mathrsfs, textcomp, epsfig, a4wide, graphicx, IEEEtrantools, tikz, verbatim, xypic }

\usepackage[all,cmtip]{xy}
\usetikzlibrary{matrix, decorations.pathreplacing,angles,quotes,cd}

\newtheorem{thm}{Theorem}[section]

\newtheorem{prop}[thm]{Proposition}
\newtheorem{lemma}[thm]{Lemma}

\theoremstyle{definition}
\newtheorem{defn}[thm]{Definition}

\usepackage{mathtools}

\newcommand{\T}{\mathcal{T}}
\newcommand{\Z}{\mathbb{Z}}
\newcommand{\spin}{\mathfrak{s}}
\newcommand{\bardel}{\overline{\partial}}

     \RequirePackage{rotating}                   
    \def\HMt{%
       \setbox0=\hbox{$\widehat{\mathit{HM}}$}
       \setbox1=\hbox{$\mathit{HM}$}
       \dimen0=1.1\ht0
       \advance\dimen0 by 1.17\ht1
       \smash{\mskip2mu\raise\dimen0\rlap{%
          \begin{turn}{180}
              {$\widehat{\phantom{\mathit{HM}}}$}
           \end{turn}} \mskip-2mu    
                \mathit{HM}
    }{\vphantom{\widehat{\mathit{HM}}}}{}}
    
         \RequirePackage{rotating}                   
    \def\HSt{%
       \setbox0=\hbox{$\widehat{\mathit{HS}}$}
       \setbox1=\hbox{$\mathit{HS}$}
       \dimen0=1.1\ht0
       \advance\dimen0 by 1.17\ht1
       \smash{\mskip2mu\raise\dimen0\rlap{%
          \begin{turn}{180}
              {$\widehat{\phantom{\mathit{HS}}}$}
           \end{turn}} \mskip-2mu    
                \mathit{HS}
    }{\vphantom{\widehat{\mathit{HS}}}}{}}

\theoremstyle{remark}
\newtheorem{remark}{Remark}
\newtheorem{example}{Example}

\begin{document}
\title{Monopole Floer homology and invariant theta characteristics}

\author{Francesco Lin}
\address{Department of Mathematics, Columbia University} 
\email{flin@math.columbia.edu}

\begin{abstract}
We describe a relationship between the monopole Floer homology of three-manifolds and the geometry of Riemann surfaces. Consider an automorphism $\varphi$ of a compact Riemann surface $\Sigma$ with quotient $\mathbb{P}^1$. There is a natural correspondence between theta characteristics $L$ on $\Sigma$ which are invariant under $\varphi$ and self-conjugate spin$^c$ structures $\spin_L$ on the mapping torus $M_{\varphi}$ of $\varphi$. We show that the monopole Floer homology groups of $(M_{\varphi},\spin_L)$ are explicitly determined by the eigenvalues of the (lift of the) action of $\varphi$ on $H^0(L)$, the space of holomorphic sections of $L$. Decategorifying our computation, we also obtain that the dimension of $H^0(L)$ equals the Reidemeister-Turaev torsion of $(M_{\varphi},\spin_L)$. Finally, we combine our description with the Atiyah-Bott $G$-spin theorem to provide explicit computations of the Floer homology groups for all automorphisms $\varphi$ of prime order in terms of ramification data.
\end{abstract}

\maketitle

\section*{Introduction}
The interplay between Seiberg-Witten theory and complex geometry has been very fruitful since the early development of the theory (see for example \cite{Mor},\cite{FM},\cite{Bru} among the many). Following this thread, in this paper we investigate the interactions between monopole Floer homology \cite{KM}, a package of topological invariants of three-manifolds equipped with a spin$^c$ structure $(Y,\spin)$, and the classical study of the geometry of algebraic curves.
\par
Consider a compact Riemann surface $\Sigma$ of genus $g\geq 2$, and choose an automorphism $\varphi$ (necessarily of finite order $d$) such that the quotient is $\mathbb{P}^1$. By Kummer theory, we can think of $\Sigma$ as the Riemann surface associated to the affine curve $z^d=p(w)$ for some polynomial $p$ (which possibly has multiple roots), and $\varphi(z,w)=(\zeta z,w)$ for some primitive $d$th root of unity (some authors call such curves \textit{$\mathbb{Z}_d$-curves} \cite{FZ}). We then consider two objects:
\begin{itemize}
\item From the point of view of low-dimensional topology, we consider the \textit{mapping torus} $M_{\varphi}$ of $\varphi$, which is a three-manifold with $b_1=1$;
\item On the algebraic geometry side, we consider a \textit{theta characteristic} of $\Sigma$, i.e. one of the $2^{2g}$ holomorphic line bundles $L$ which are square roots of the canonical bundle $K$, i.e. $L^2\cong K$ \cite{ACGH}.
\end{itemize}
The starting point of our result is that because $\Sigma/\varphi=\mathbb{P}^1$, there is a natural bijection
\begin{equation*}
\{\text{self-conjugate spin$^c$ structures $\spin$ on $M_{\varphi}$}\}\longleftrightarrow\{\varphi\text{-invariant theta characteristics }L\}
\end{equation*}
where a spin$^c$ structure $\spin$ is \textit{self-conjugate} if it is isomorphic to its conjugate $\spin\cong\bar{\spin}$. Equivalently, $\spin$ is induced by some spin structure (as $b_1(M_{\varphi})=1$, it is induced by exactly two of them). We will denote the spin$^c$ structure corresponding to $L$ by $\spin_L$. This is closely related to the classical observation due to Atiyah \cite{Ati} that there is a natural bijection
\begin{equation}\label{atiobs}
\{\text{spin structures on $\Sigma$}\}\longleftrightarrow\{\text{theta characteristics }L\}.
\end{equation}
We will investigate the interplay between the monopole Floer homology groups of $(M_{\varphi},\spin_L)$ and the geometry of the theta characteristic $L$, specifically the space of its holomorphic sections $H^0(L)$. The latter is very well-studied object closely related to the study of bitangents in projective geometry \cite[Chapter 5]{Dol}; in general, its dimension $h^0(L)$ depends on the underlying holomorphic geometry of $\Sigma$ in a subtle way.
In our setup, because $L$ is $\varphi$-invariant, the action of $\varphi$ on $\Sigma$ lifts to an action $\tilde{\varphi}$ on $L$ (whose order is either $d$ or $2d$) compatible with the identification $L^2\cong K$, and we obtain an induced finite order action $\tilde{\varphi}^*$ on $H^0(L)$ by pull-back. Our main result is the following.
\begin{thm}\label{main}
Suppose $\Sigma/\varphi=\mathbb{P}^1$, and consider a $\varphi$-invariant theta characteristic $L$. Then, for suitable choices of metric and perturbation, the Floer chain complex of $(M_{\varphi},\spin_L)$ (possibly with coefficients in a local system $\Gamma$) is explicitly determined by the eigenvalues of the action of $\tilde{\varphi}^*$ on $H^0(L)$. Hence for any local coefficient system $\Gamma$ one can compute explicitly both the Floer homology group $\HMt_{\bullet}(M_{\varphi},\spin_L;\Gamma)$ and the $U$-action in terms of the geometry of the theta characteristic $L$.
\end{thm}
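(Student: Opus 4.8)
The plan is to find a metric and a perturbation on $M_{\varphi}$ for which the three-dimensional Seiberg--Witten equations become a finite-dimensional problem governed entirely by the operator $\tilde{\varphi}^{*}$ on $H^{0}(L)$, and then to read the Floer chain complex off that data. Since $g\geq 2$ the surface $\Sigma$ carries a unique hyperbolic metric, which is therefore $\varphi$-invariant and makes $\varphi$ a holomorphic isometry. As $\varphi$ has finite order, $M_{\varphi}=(\Sigma\times S^{1})/(\mathbb{Z}/d)$ --- the quotient by the diagonal action of $\varphi$ on $\Sigma$ and of rotation on $S^{1}$ --- is a Seifert fibered space over the hyperbolic orbifold $\mathbb{P}^{1}=\Sigma/\varphi$, and I give it the induced product-type metric $g_{\varphi}$, locally isometric to $\Sigma\times\mathbb{R}$ with $\Sigma$ hyperbolic and K\"{a}hler. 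I then run the construction of $\HMt$ with $g_{\varphi}$ and a small, holomorphically adapted perturbation chosen to make the equations Morse--Bott, exactly as in the analysis of Seiberg--Witten solutions on Seifert fibered spaces of Mrowka--Ozsv\'{a}th--Yu; a local system $\Gamma$ is carried along and affects only the bookkeeping at the end.

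The heart of the argument is to identify the resulting moduli space $\mathcal{M}$. The reducibles have vanishing spinor and flat connection; since $c_{1}(\spin_{L})$ is $2$-torsion they form a single circle $C\cong\mathbb{R}/\mathbb{Z}$ (Morse--Bott), parametrized by the holonomy $e^{i\theta}$ around the base direction. The hypothesis $\Sigma/\varphi=\mathbb{P}^{1}$ forces $H^{1}(\Sigma;\mathbb{R})^{\varphi}=0$, so every flat connection in $C$ restricts to a fixed flat line bundle on the fibers $\Sigma$, which one checks may be taken trivial; a Fourier decomposition along the Seifert direction then identifies $\ker D_{A_{\theta}}$ with the $e^{i\theta}$-eigenspace of $\tilde{\varphi}^{*}$ on $H^{0}(\Sigma,L)$, together with its Serre-dual counterpart on $H^{1}(\Sigma,L)\cong H^{0}(\Sigma,L)^{\vee}$, whose $\tilde{\varphi}^{*}$-eigenvalues are the inverse roots of unity. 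Hence $D_{A_{\theta}}$ is invertible except at finitely many angles $\theta_{1},\dots,\theta_{m}$ --- the arguments of the eigenvalues $\lambda_{j}$ of $\tilde{\varphi}^{*}$ on $H^{0}(L)$ and of their inverses --- with $\dim_{\mathbb{C}}\ker D_{A_{\theta_{j}}}$ equal to the dimension $h^{0}_{\lambda_{j}}$ of the corresponding eigenspace $V_{\lambda_{j}}\subset H^{0}(L)$, so that the spectral flow of $\{D_{A_{\theta}}\}$ around $C$ is entirely determined by the multiset of eigenvalues of $\tilde{\varphi}^{*}$ on $H^{0}(L)$. For the irreducibles, the vortex analysis for Seifert spaces (using the product structure and the Weitzenb\"{o}ck formula) shows that every solution is pulled back from the fibers, where it restricts to a nonzero holomorphic section of $L$ twisted by a flat bundle coming from $C$; compatibility around the base forces this section to be a $\tilde{\varphi}^{*}$-eigensection, so the irreducible part of $\mathcal{M}$ is the disjoint union of Morse--Bott families $\mathbb{P}(V_{\lambda_{1}}),\dots,\mathbb{P}(V_{\lambda_{m}})$, the $j$-th one lying over $\theta_{j}\in C$. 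A further small $\varphi$-equivariant perturbation supported near these loci makes $\mathcal{M}$ --- and the moduli of flow lines between its strata --- Morse--Bott regular.

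Granting this description, assembling the chain complex is bookkeeping in the Kronheimer--Mrowka blow-up. The blown-up circle $C$ contributes the tower $\mathcal{R}$, whose internal grading is dictated by the spectral flow computed above, i.e. by the list $(\theta_{j},h^{0}_{\lambda_{j}})$; each family $\mathbb{P}(V_{\lambda_{j}})$ contributes $H_{*}(\mathbb{P}(V_{\lambda_{j}}))\cong\mathbb{Z}[U]/(U^{h^{0}_{\lambda_{j}}})$, placed in the degree determined by the spectral flow from a basepoint of $C$ to $\theta_{j}$. Since $g_{\varphi}$ is K\"{a}hler the flow-line moduli are again holomorphic, and a dimension count shows that the only nonzero components of the differential and of the $U$-action are the tautological ones relating consecutive pieces; hence the Floer chain complex of $(M_{\varphi},\spin_{L};\Gamma)$ is isomorphic, as a graded complex of modules over $\mathcal{R}$ (resp. over the Novikov completion prescribed by $\Gamma$), to the explicit model built from the data $(\theta_{1},h^{0}_{\lambda_{1}}),\dots,(\theta_{m},h^{0}_{\lambda_{m}})$. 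Tensoring with $\Gamma$ affects only the reducible tower, rendering it --- and with it all of $\HMt_{\bullet}(M_{\varphi},\spin_{L};\Gamma)$ --- finitely generated over the Novikov ring, and the homology together with its $U$-action is then read off mechanically from the eigenvalues of $\tilde{\varphi}^{*}$ on $H^{0}(L)$.

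The main obstacle is the analytic content of the second step: proving that there are no irreducible solutions other than the holomorphic vortices, and that the reducible circle, the irreducible families, and the flow lines joining them are all regular after an allowable perturbation. This amounts to a correspondence between Seiberg--Witten solutions and holomorphic data in a three-dimensional, $b_{1}=1$, self-conjugate, Morse--Bott setting; the delicate point is the interaction between the (never nondegenerate) circle of reducibles and the irreducible families bifurcating off it at the eigenvalue angles $\theta_{j}$, so that carrying the $\tilde{\varphi}^{*}$-eigenvalue twist correctly through the Dirac spectral flow, and checking that the blown-up moduli space has the expected structure, is where the real work lies. Once this input is secured, the remaining combinatorics and the passage to local coefficients are formal.
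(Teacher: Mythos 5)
Your overall plan (invariant metric, Fourier decomposition along the fiber direction identifying the Dirac kernels over the reducible circle with eigenspaces of $\tilde{\varphi}^*$, then reading off the complex) overlaps with the paper at the level of Lemma \ref{pertdir}-type statements, but the central claim about the moduli space is wrong. You assert that the irreducible part of the moduli space is a union of Morse--Bott families $\mathbb{P}(V_{\lambda_j})$ of pulled-back vortices. For the self-conjugate (hence torsion) spin$^c$ structure $\spin_L$ there are no irreducible solutions at all: integrating the diagonal part of the curvature equation over a fiber gives $\int_\Sigma(|\alpha|^2-|\beta|^2)=0$ because $F_{B^t}$ is exact, while a solution pulled back from the fiber has $\beta=0$ (or $\alpha=0$), so the spinor vanishes identically --- this is exactly Lemma \ref{redunper}, and it is the reason the paper cites \cite{Lop},\cite{MOY} for the \emph{absence} of irreducibles rather than for vortex families. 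Consequently the Floer complex is generated purely by reducible critical points in the blow-up; the technical core of the paper (Section \ref{perturb}) consists precisely in showing that the perturbation $-(\delta/2)\|\Psi\|^2+\varepsilon f(B)$, with $f$ a Morse function on $\mathbb{T}$, achieves transversality \emph{without} creating irreducibles, and the differentials are then $U$-powers computed from the spectral flow of the perturbed Dirac operators between consecutive critical points of $f$. Irreducible, holomorphic-curve-like generators would only appear if one used a non-exact perturbation, which computes $\HMt_{\bullet}(M_{\varphi},\spin_L,c;\Gamma)$ and agrees with the untwisted groups only for Novikov-type local systems --- a distinction your proposal does not address.

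Two further points. First, the reducible circle is \emph{not} Morse--Bott: the normal Hessian degenerates exactly at the angles where $D_{A_\theta}$ has kernel, which is why the paper replaces the Morse--Bott picture by isolated reducibles via the Morse function on $\mathbb{T}$ together with the $\delta$-shift of the Dirac spectrum. Second, you never use the key input that $\Sigma/\varphi=\mathbb{P}^1$ beyond $H^1(\Sigma;\mathbb{R})^{\varphi}=0$: the crucial fact is Lemma \ref{noconj}, that $\mathrm{spec}(\tilde{\varphi}^*)$ on $H^0(L)$ is disjoint from its conjugate. This is what rules out irreducibles in the $\delta\to 0$ limit (a limiting kernel element would have both components nonzero, forcing a common eigenvalue of the conjugate actions on $\mathcal{H}^{\pm}$) and what guarantees that at each bad angle the kernel is purely positive or purely negative, which determines the direction of spectral flow and hence the explicit form of the differentials. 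Without this disjointness even the bookkeeping in your last paragraph is not determined by the eigenvalue data.
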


\begin{remark}
Because $\spin_L$ is a torsion spin$^c$ structure, the associated Floer homology groups admit an absolute $\mathbb{Q}$-grading \cite[Chapter 38]{KM}; the description we will provide of the Floer chain complex allows to determine this grading up to an overall shift.
\end{remark}

The exact description of the Floer chain complex will require some additional setup, and we refer the reader to Section \ref{chain} for a precise statement. For now we will content ourselves with some homological results that directly follow from the description. As $b_1(M_{\varphi})=1$, there are interesting local coefficient systems on the configuration space. Recall for example that to each a $1$-cycle $\eta$ in $M_{\varphi}$, in \cite[Section 3.7]{KM} the authors associate a local system $\Gamma_{\eta}$ with fiber $\mathbb{R}$. The corresponding Floer groups with local coefficients $\HMt_{\bullet}(M_{\varphi},\spin_L;\Gamma_\eta)$ are $\mathbb{R}[U]$-modules, finite dimensional as $\mathbb{R}$-vector spaces, which play a central role when discussing gluing formulas for Seiberg-Witten invariants \cite[Section 3.8]{KM}. We will begin by stating the result for such local systems $\Gamma_{\eta}$ as they are simpler to state.
\par
We will see in Section \ref{prelim} that, because $\Sigma/\varphi=\mathbb{P}^1$, $\zeta$ and $\bar{\zeta}$ are never simultaneously eigenvalues of $\tilde{\varphi}^*$, so that we can write the list of eigenvalues (which are elements in $S^1$) as
\begin{equation}
e^{ i\pi r_1},\dots, e^{ i\pi r_k}\quad\text{with}\quad0<|r_1|<\dots<|r_k|<1,
\end{equation}
where $e^{ i\pi r_i}$ has multiplicity by $m_i$.
\begin{thm}\label{thmlocal}
Suppose $\Sigma/\varphi=\mathbb{P}^1$ and $[\eta]\neq0\in H^1(M_{\varphi};\mathbb{R})$. Then
\begin{equation*}\mathrm{dim}_{\mathbb{R}}\HMt_{\bullet}(M_{\varphi},\spin_L;\Gamma_\eta)=h^0(L).
\end{equation*}
Furthermore, denote by $c_L$ the number of changes of sign in the sequence $r_1,\dots, r_k$; we set $c_L=-1$ if such sequence is empty. Then $\HMt_{\bullet}(M_{\varphi},\spin_L;\Gamma_\eta)$ has rank $c_L+1$ as a $\mathbb{R}[U]$-module. 
\end{thm}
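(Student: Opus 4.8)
The plan is to read off both statements from the explicit model of the Floer chain complex of $(M_{\varphi},\spin_L)$ supplied by Theorem \ref{main} — to be described in detail in Section \ref{chain} — and then to do some linear algebra over $\mathbb{R}[U]$. That complex has two kinds of generators: a finite collection of \emph{irreducible} generators, organized into $k$ blocks, the block attached to the eigenvalue $e^{i\pi r_j}$ consisting of $m_j$ generators sitting in consecutive relative gradings (so each block is \emph{a priori} a $U$-tower of length $m_j$, and there are $\sum_j m_j=h^0(L)$ irreducible generators altogether), together with the \emph{reducible} generators, which — since $\spin_L$ is a torsion spin$^c$ structure — form the standard $U$-periodic configuration attached to the (Morse--Bott) circle of reducibles. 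Because the Floer homology groups are topological invariants it is enough to compute with this particular metric and perturbation.

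The first observation is that passing to a nontrivial local system collapses the reducible contribution. Since $[\eta]\neq 0\in H^1(M_{\varphi};\mathbb{R})$, the reducible locus does not contribute to the Floer homology with $\Gamma_\eta$-coefficients — equivalently $\overline{HM}_{\bullet}(M_{\varphi},\spin_L;\Gamma_\eta)=0$ — because $\Gamma_\eta$ has nontrivial monodromy around the circle of reducibles; at the chain level the two interleaved $U$-towers of reducible generators become paired off by a nonzero differential, so the reducible part of the complex is acyclic. Together with the usual finiteness properties of the flavours of monopole Floer homology and the exact triangle among them, this already shows that $\HMt_{\bullet}(M_{\varphi},\spin_L;\Gamma_\eta)$ is finite dimensional over $\mathbb{R}$, hence a finitely generated torsion $\mathbb{R}[U]$-module.

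For the dimension statement I would then compute the homology of this now finite complex. With the reducible part acyclic, the only way the homology could fall short of the $h^0(L)$ irreducible generators is through differentials among them or cancellation against reducibles; from the explicit description there are no differentials among the irreducible generators, and those from irreducible to reducible generators land in the already-cancelled part of the tower, so no such reduction occurs, giving $\dim_{\mathbb{R}}\HMt_{\bullet}(M_{\varphi},\spin_L;\Gamma_\eta)=h^0(L)$.

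For the $\mathbb{R}[U]$-module structure I would track how $U$ threads the surviving classes into towers. The $U$-action lowers the grading by $2$; within a block it identifies consecutive generators, so the $j$-th block really is a $U$-tower of length $m_j$, while between the block of $e^{i\pi r_j}$ and that of $e^{i\pi r_{j+1}}$ the relative gradings are arranged so that $U$ joins the two towers into one precisely when $r_j$ and $r_{j+1}$ have the same sign, and keeps them separate at a change of sign. The towers therefore amalgamate exactly along the maximal constant-sign runs of $r_1,\dots,r_k$, of which there are $c_L+1$ (with none, and rank $0$, precisely when the list is empty, $c_L=-1$, i.e. $h^0(L)=0$); this is the asserted $\mathbb{R}[U]$-rank. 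I expect this last step to be the main obstacle: whereas the dimension count is essentially forced once the reducible tower is understood, extracting the $\mathbb{R}[U]$-module structure requires enough control on the $U$-action and on the relative gradings in the explicit complex — and on the way the irreducible and reducible generators interact — to see that only the \emph{signs} of the $r_j$, and not their precise values nor the multiplicities $m_j$, govern how the towers connect; this is precisely the information the moduli-space analysis underlying Theorem \ref{main} is designed to provide.
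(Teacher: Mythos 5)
There is a genuine gap, and it starts with the model of the chain complex you assume. Your proof is organized around a complex with two kinds of generators: ``irreducible'' generators arranged in blocks of size $m_j$, plus a reducible circle whose contribution you then discard. But the entire point of the perturbation scheme behind Theorem \ref{main} (Section \ref{perturb}) is that for suitable small $\delta,\varepsilon$ there are \emph{no} irreducible critical points at all: every generator is reducible, sitting in a tower $\T$ over a critical point of the Morse function $f$ on the circle $\mathbb{T}$ of reducible solutions, with the minima of $f$ placed at the connections where the perturbed Dirac operator has kernel. The eigenvalue data does not appear as a count of irreducible generators; it enters through spectral flow, namely the differential from the towers over the two maxima adjacent to the $j$-th minimum is multiplication by $U^{m_j}$ on one side and (essentially) the identity or a diagonal inclusion on the other, the side being determined by the sign of $r_j$ (Lemma \ref{pertdir} and the theorem of Section \ref{chain}). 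Consequently the two assertions your argument rests on --- that there are $h^0(L)$ irreducible generators with no differentials among them, and that the $U$-action ``joins'' consecutive blocks exactly when $r_j,r_{j+1}$ have the same sign --- describe a complex that does not occur in this setting, and in any case are stated without proof: you yourself flag the $U$-module step as the main obstacle and defer it to the moduli-space analysis, which is precisely the content that has to be supplied.

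The one ingredient you have in the right spirit is the role of the local system: with $[\eta]\neq 0$ the portion of the complex that looks like the Morse complex of the circle (all differentials identities or diagonal inclusions, as in (\ref{circchain})) is acyclic. In the paper this is not used to ``delete the reducibles and keep the irreducibles'' --- there is nothing else to keep --- but to reduce the all-reducible zigzag complex to a direct sum of finite subcomplexes whose differentials carry the factors $U^{m_j}$. Computing their homology is what yields a torsion $\mathbb{R}[U]$-module of total dimension $\sum_j m_j=h^0(L)$, with the number of cyclic summands governed by the sign changes in $r_1,\dots,r_k$, i.e.\ rank $c_L+1$ (cf.\ Examples \ref{ex1}--\ref{ex4}; note also that at a sign change the correct summands are $\mathbb{R}[U]/U^{M}\oplus\mathbb{R}[U]/U^{m}$ with a specific grading alignment, finer information than your amalgamation rule records). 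Your final numerical answers agree with the theorem, but as written the dimension count and the rank statement are not derived from the actual structure of the complex, so the proof does not go through without redoing the analysis of the reducible towers and their $U^{m_j}$-differentials.
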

\begin{remark}\label{Z2grad}
The Floer homology groups admit a canonical $\mathbb{Z}/2$-grading \cite[Section $22.4$]{KM}, and the group in the statement of the theorem lies entirely in even grading.
\end{remark}

We then discuss the `basic' version of Floer homology $\HMt_{\bullet}(M_{\varphi},\spin_L)$; this is especially useful for example when studying the topology of negative definite manifolds bounding $M_{\varphi}$ via the analogue of Fr\o yshov invariants for manifolds with $b_1=1$ \cite{OSd}. Denoting the tower $\T_d=\mathbb{Z}[U,U^{-1}]/U\cdot\mathbb{Z}[U]$ with bottom element $1$ lying in degree $d$, we have the following.
\begin{thm}\label{thmnonlocal}
Suppose $\Sigma/\varphi=\mathbb{P}^1$. Then, up to an overall grading shift, we have the decomposition of $\mathbb{Z}[U]$-modules
\begin{equation}
\HMt_{\bullet}(M_{\varphi},\spin_L)=\T_{-1}\oplus \T_{ -2\Delta_L} \oplus \mathit{HM}_{\bullet}(M_{\varphi},\spin_L)
\end{equation}
where:
\begin{itemize}
\item $\Delta_L\geq0$ is the difference between the maximum and the minimum in the sequence $\sum_{i=1}^n \mathrm{sgn}(r_i)m_i$, $n=1,\dots, k$.
\item the action of a generator of $H^1(M_{\varphi};\mathbb{Z})$ maps the second tower (which lies in even absolute $\mathbb{Z}_2$ grading) surjectively onto the first.
\item the reduced Floer homology group $\mathit{HM}_{\bullet}(M_{\varphi},\spin_L)$ has rank as a $\mathbb{Z}[U]$-module $\max\{c_L,0\}$ and lies in even absolute $\mathbb{Z}_2$ grading.
\end{itemize}
In particular, the reduced Floer homology $\mathit{HM}_{\bullet}(M_{\varphi},\spin_L)$ is trivial if and only if all eigenvalues lie either on the upper or lower half plane.
\end{thm}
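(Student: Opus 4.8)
The plan is to deduce this from the explicit chain-level description of Theorem~\ref{main} (see Section~\ref{chain}), together with the general structure of $\HMt$ --- viewed as a $\mathbb{Z}[U]$-module with $\Lambda^\ast H^1$-action --- for a torsion spin$^c$ structure on a three-manifold with $b_1=1$ \cite[Chapters 35, 39]{KM}. The feature of Theorem~\ref{main} I would use is that, for the good metric and perturbation, the reducible critical points of the (blown-up) Chern--Simons--Dirac functional, their relative gradings, and the differentials out of them are governed by the eigenvalue list $(r_1,m_1),\dots,(r_k,m_k)$: the boundary-stable and boundary-unstable reducibles sit along the reducible arc at ``heights'' recorded by the partial sums $S_n=\sum_{i=1}^n\mathrm{sgn}(r_i)m_i$, which compute the spectral flow of the associated family of perturbed Dirac operators, and the $U$- and $H^1$-actions shift heights in the usual way. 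Since $\spin_L$ is torsion and $b_1=1$, the bar homology $\overline{\mathit{HM}}_\bullet(M_\varphi,\spin_L)$ is, up to an overall grading shift, the two-tower module $\mathbb{Z}[U,U^{-1}]\otimes_{\mathbb{Z}}H_\bullet(S^1)$, on which a generator of $H^1(M_\varphi;\mathbb{Z})$ carries one tower isomorphically onto the other; this fixes the shape of the answer, and the eigenvalue data serve only to pin down the two numerical invariants $\Delta_L$ and $c_L$.

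First I would isolate the two towers by running the exact triangle $\widehat{\mathit{HM}}_\bullet\xrightarrow{i_\bullet}\overline{\mathit{HM}}_\bullet\xrightarrow{j_\bullet}\HMt_\bullet\xrightarrow{p_\bullet}\widehat{\mathit{HM}}_{\bullet-1}$. Its image $\mathrm{im}(j_\bullet)\cong\overline{\mathit{HM}}_\bullet/\mathrm{im}(i_\bullet)$ is the quotient of the two-tower module by a pair of downward $\mathbb{Z}[U]$-towers, and the relative offset of these two downward towers is $2\Delta_L$: it equals the range $\max_n S_n-\min_n S_n$ of gradings spanned by the reducible generators, doubled because consecutive reducibles differ in degree by $2$. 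Hence $\mathrm{im}(j_\bullet)\cong\T_{d}\oplus\T_{d-2\Delta_L}$ for some $d$, which after the conventional normalization becomes $\T_{-1}\oplus\T_{-2\Delta_L}$; the surjectivity of the $H^1$-action from the second tower onto the first is inherited from the $\Lambda^\ast H^1$-module structure on $\overline{\mathit{HM}}$, and tracking parities of the reducible generators places the second tower, together with the reduced homology, in even $\mathbb{Z}/2$-grading, while $\T_{-1}$ sits in odd grading.

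Next I would compute the reduced homology $\mathit{HM}_\bullet(M_\varphi,\spin_L)$, the summand of $\HMt_\bullet$ complementary to $\mathrm{im}(j_\bullet)$, directly from the finite part of the model: this is a ``staircase'' complex attached to the lattice path $S_0,S_1,\dots,S_k$, and one checks that its homology carries one $\mathbb{Z}[U]$-summand for each reversal of the path --- equivalently, for each sign change among $r_1,\dots,r_k$: the monotone runs of the path feed only the two towers, while each turn leaves a summand behind. This yields rank $\max\{c_L,0\}$ as a $\mathbb{Z}[U]$-module (with $c_L=-1$ when the sequence is empty), and, by Remark~\ref{Z2grad}, even $\mathbb{Z}/2$-grading. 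Finally, all eigenvalues lie in the open upper (resp.\ lower) half-plane exactly when every $r_i>0$ (resp.\ $r_i<0$), hence exactly when $r_1,\dots,r_k$ has no sign change, hence exactly when $c_L\le 0$; by the rank count, this is precisely the condition for $\mathit{HM}_\bullet(M_\varphi,\spin_L)$ to vanish.

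The main obstacle is the first point: extracting from Theorem~\ref{main} and Section~\ref{chain} a model explicit enough that the gradings of the reducible generators can be matched to the combinatorics of the partial sums $S_n$, so that the offset between the two downward towers is precisely $2\Delta_L$ and the number of surviving $\mathbb{Z}[U]$-summands is precisely $c_L$. Alongside this one must check that the $H^1$-action is compatible with the identification of the reducible locus with a circle (so that the claimed surjectivity holds on the nose), and that any irreducible critical points that remain --- the ones carrying the count $h^0(L)$ of Theorem~\ref{thmlocal} --- neither disturb the two-tower part of $\overline{\mathit{HM}}$ nor change the rank of the reduced homology.
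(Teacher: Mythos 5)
Your route is workable and genuinely different in emphasis from the paper's. The paper never invokes $\overline{\mathit{HM}}$ or the exact sequence: it computes $\HMt_{\bullet}(M_{\varphi},\spin_L)$ directly from the explicit complex of Section \ref{chain}, by normalizing gradings so that above a certain level the complex coincides with the Morse complex of the circle (\ref{circchain}) (this produces the two towers), while below that level it splits as a direct sum of finite staircase subcomplexes in which every arrow is an isomorphism; each such staircase has homology $\mathbb{Z}$ concentrated in its top degree, and bookkeeping of these top degrees simultaneously yields $-2\Delta_L$ (the lowest grading among the towers sitting over maxima of the Morse function on $\mathbb{T}$), the count $\max\{c_L,0\}$ of surviving $\mathbb{Z}[U]$-summands, and (via the conjugation involution on the set of staircases and a spectral-flow count) the $h^0(L)$ statement of Theorem \ref{thmlocal}. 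Your approach buys a cleaner explanation of the qualitative shape — the two towers and the surjectivity of the $H^1$-action are inherited from the standard two-tower structure of $\overline{\mathit{HM}}$ for a torsion spin$^c$ structure with $b_1=1$, by naturality — but be aware that over $\mathbb{Z}$ the identification of the image of $\overline{\mathit{HM}}$ inside $\HMt$ with two genuine towers, and the determination of their bottom degrees and of the reduced summand, still require exactly the staircase analysis of the chain model. In other words, what you set aside as the ``main obstacle'' is not a residual technicality: it is the entire content of the paper's proof, and your proposal as written establishes the shape of the answer but defers its quantitative part.

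One point is an actual misconception rather than a deferral: there are \emph{no} irreducible critical points in the model — that is precisely what Section \ref{perturb} establishes for the chosen $\delta$, $\varepsilon$ and Morse function — and $h^0(L)$ is not carried by irreducibles. It enters through the reducibles: by Lemma \ref{pertdir} the connections in $\mathbb{T}$ at which the perturbed Dirac operator has kernel correspond to eigenvalues of $\tilde{\varphi}^*$ on harmonic spinors, and it is the spectral flow along $\mathbb{T}$, i.e.\ the multiplicities $m_i$, that produces $h^0(L)$ in Theorem \ref{thmlocal}. This matters for your argument: irreducibles are invisible to $\overline{\mathit{HM}}$, so if they were present they could change both the reduced group and the tower bottoms, and their absence cannot be ``checked'' from within your framework — it is an input from the transversality analysis. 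Two smaller remarks: your inclusion of $S_0=0$ in the lattice path is the convention that actually matches Examples \ref{ex1}--\ref{ex3} (a single eigenvalue of multiplicity $m_1$ gives towers offset by $2m_1-1$, i.e.\ $\Delta_L=m_1$, and in Example \ref{ex3} one gets $\Delta_L=\max(m_1,m_2)$ only if the empty partial sum is counted), so keep it when matching against the displayed formula; and the grading offsets between consecutive reducible towers are $2m_i$, coming from complex spectral flow, which is presumably what you intend by the ``doubling'' remark.
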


Regarding the proof of Theorem \ref{main}, the key complication is that the spin$^c$ structures $\spin_L$ are torsion. This implies that even though it is true for a general automorphism $\varphi$ of a Riemann surface that the equations of $M_{\varphi}$ do not admit irreducible solutions for specific metrics and perturbations \cite{Lop},\cite{MOY}, the unperturbed Seiberg-Witten equations always admit a $b_1(M_{\varphi})$-dimensional torus $\mathbb{T}$ of reducible solutions \cite[Section $4.2$]{KM}, which turns out to be \textit{highly degenerate}: for example, in the case $\varphi$ is the identity, the Hessian of the Chern-Simons-Dirac functional is singular along a copy of the ($2g-2$)-dimensional theta divisor of $\Sigma$ in the ($2g+1$)-dimensional torus of reducible solutions on $M_{\varphi}=S^1\times \Sigma$; the computation of its Floer homology via surgery techniques \cite{JM} indeed confirms that very delicate things must happen when trying to perturb the equations to achieve transversality.
\par
In this paper we will focus on the case $\Sigma/\varphi=\mathbb{P}^1$, so that $M_{\varphi}$ has $b_1=1$ and the singularities on the circle of reducible solutions $\mathbb{T}$ are as mild as one could hope; we will see how to achieve transversality while still maintaining a concrete understanding of the underlying geometry by adding suitable perturbations. In order to do so, we will use the same pertubations as those employed in \cite[Chapter 37]{KM} to study the genus one case; on the other hand, our arguments are significantly more delicate (and indeed generalize those appearing there) as we cannot rely on the existence of a flat metric on the mapping torus.

\begin{remark}
From the point of view of geometrization, our manifolds of interest correspond to the product geometry $\mathbb{H}^2\times\mathbb{R}$; three-manifolds with this geometry naturally arise when investigating the interplay of scalar curvature and \textit{non}-torsion spin$^c$ structures \cite{IY}.
\end{remark}

\subsection*{Concrete examples. }These results allow for direct computations of the Floer homology groups in many concrete examples where we can access the geometry of $L$. We begin by discussing the case of the hyperelliptic involution; this leaves all spin $2^{2g}$ structures invariant (and in fact is the only such automorphism \cite{KS}). We determine the Floer homology for all self-conjugate spin$^c$ structures as follows.
\begin{prop}\label{order2}
Consider a hyperelliptic Riemann surface of genus $g$ with hyperelliptic involution $\varphi$, and $[\eta]\neq0$. Among the $2^{2g}$ self-conjugate spin$^c$ structures $\spin_L$:
\begin{itemize}
\item exactly $\binom{2g+1}{g}$ have vanishing $\HMt_{\bullet}(M_{\varphi},\spin_L;\Gamma_\eta)$;
\item for any $1\leq w\leq g$ odd, there are exactly $\binom{2g+2}{g-w}$ self-conjugate spin$^c$ structures with $\HMt_{\bullet}(M_{\varphi},\spin_L;\Gamma_\eta)= \mathbb{R}[U]/U^{(w+1)/2}$.
\end{itemize}
\end{prop}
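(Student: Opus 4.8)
The plan is to reduce Proposition \ref{order2} to Theorem \ref{thmlocal} by a direct computation of the eigenvalue data $r_1,\dots,r_k$ (with multiplicities $m_i$) of $\tilde\varphi^*$ on $H^0(L)$ for every $\varphi$-invariant theta characteristic when $\varphi$ is the hyperelliptic involution. Since $\varphi$ has order $2$ and quotient $\mathbb{P}^1$, the surface is $z^2=p(w)$ with $p$ of degree $2g+1$ or $2g+2$ and simple roots; the canonical bundle is pulled back from $\mathbb{P}^1$ up to the ramification divisor, and a theta characteristic $L$ is specified (via Atiyah's correspondence \eqref{atiobs} and the classical description of spin structures on hyperelliptic curves) by a choice of an even-cardinality subset of the $2g+2$ Weierstrass points, i.e. by the divisor it is associated to. The holomorphic sections of $L$ then have an explicit basis in terms of the coordinates $z,w$, and on each basis element the lift $\tilde\varphi$ acts by $\pm1$ (the order of $\tilde\varphi$ is $2$ or $4$), so the eigenvalues are all $\pm 1$ or all $\pm i$ — concretely $r_i\in\{0\}$ or $r_i\in\{\pm1/2\}$ in the notation of the excerpt. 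One then reads off $h^0(L)$ and the sign-change count $c_L$ directly.

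Next I would organize the bookkeeping. Write $L=L_S$ for $S$ an even-size subset of the $2g+2$ branch points; Riemann–Roch together with the hyperelliptic geometry gives $h^0(L_S)$ as a function of $|S|$ (this is the classical statement that the theta characteristics of a hyperelliptic curve are supported on subsets of Weierstrass points, with $h^0$ jumping in the expected way). The count of subsets of a given size is the binomial coefficient, which is where the $\binom{2g+1}{g}$ and $\binom{2g+2}{g-w}$ come from; the parity constraint ($|S|$ even, or an analogous normalization) and the identification $L_S\cong L_{S^c}\otimes(\text{something})$ account for the shift between $2g+1$ and $2g+2$ in the two binomial coefficients, and one must be careful that this identification is exactly the involution pairing invariant theta characteristics. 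For the eigenvalue side, since all eigenvalues of $\tilde\varphi^*$ are equal (they are all $+1$, all $-1$, all $+i$, or all $-i$, because $\tilde\varphi$ commutes with the section ring structure and the sections form a single ``isotypic'' block determined by the lift), the sequence $r_1,\dots,r_k$ has length $k\le 1$: either it is empty (so $c_L=-1$ and $\HMt=0$), or it is a single value, so there are no sign changes, $c_L=0$, and Theorem \ref{thmlocal} gives $\HMt_\bullet=\mathbb{R}[U]/U^{m_1}$ with $m_1=h^0(L)$. Matching $(w+1)/2=m_1=h^0(L_S)$ with $|S|$ odd-versus-even bookkeeping produces the stated formulas.

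The main obstacle I expect is the precise identification of which invariant theta characteristics have nonzero $H^0$ and the exact value of $h^0$ — i.e. correctly pinning down the correspondence between subsets $S$ of Weierstrass points, the integer $w$, and the jump in $h^0$ — together with verifying that the lift $\tilde\varphi^*$ really does act by a single scalar on all of $H^0(L_S)$ (equivalently, that the nonzero sections all lie in one eigenspace). For the first point I would invoke the classical theory of theta characteristics on hyperelliptic curves (Mumford; see also \cite[Chapter 5]{Dol}): a theta characteristic associated to a subset $S$ with $|S|=g+1-2t$ has $h^0=t+1$ (after the standard normalization), and counting subsets of that size gives exactly $\binom{2g+2}{g-2t}$-type coefficients, which one reconciles with $\binom{2g+2}{g-w}$ by setting $w=2t+1$ and separately handling the ``$2g+1$'' case (where one branch point is at infinity, breaking the symmetry and yielding $\binom{2g+1}{g}$ vanishing ones). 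For the second point, since $\varphi$ acts trivially on $\mathbb{P}^1$ and $L$ is pulled back from the base twisted by a $\varphi$-invariant divisor, $\tilde\varphi$ acts on $H^0(L)$ through multiplication by a root of unity that is constant because $H^0(L)$ is identified with sections of a line bundle on the quotient orbifold — the scalar is determined by the order of $\tilde\varphi$ and the ramification weight of $L$ at the fixed points — so all $r_i$ coincide and the sign-change count is forced. Once these two facts are in hand, Proposition \ref{order2} follows by substitution into Theorem \ref{thmlocal}.
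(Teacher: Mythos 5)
Your proposal follows essentially the same route as the paper: note that for the hyperelliptic involution the lifted action $\tilde\varphi^*$ on $H^0(L)$ is by a single scalar, so the spectrum is a single eigenvalue of multiplicity $h^0(L)$, whence Theorem \ref{thmlocal} (via the computation of Example \ref{ex1}) gives $\HMt_{\bullet}(M_{\varphi},\spin_L;\Gamma_\eta)\cong\mathbb{R}[U]/U^{h^0(L)}$, and the binomial counts are then exactly the classical Weierstrass-point computation of $h^0$ for theta characteristics of a hyperelliptic curve (cited in the paper from \cite{ACGH} and \cite{BS}). Two slips to repair, both fixable from the paper's own lemmas: the eigenvalue can never be $\pm1$ and the lift never has order $2$ --- by Lemma \ref{liftorder} the lift has order exactly $4$ (the involution has $2g+2$ fixed points), so $\tilde\varphi^{*2}=-\mathrm{id}$ forces eigenvalues $\pm i$, and Lemma \ref{noconj} excludes having both, which is the correct reason the spectrum is a single scalar (your ``pulled back from the quotient orbifold, hence constant scalar'' principle is false in general, as the order-$5$ examples with two distinct eigenvalues show, and the case $r_i=0$ you allow would fall outside the hypotheses of Theorem \ref{thmlocal}); moreover the classical normalization is $h^0(L_S)=s$ for $|S|=g+1-2s$ taken modulo complementation $S\sim S^c$, so the count $\binom{2g+1}{g}=\tfrac12\binom{2g+2}{g+1}$ of vanishing ones comes from the self-paired size $|S|=g+1$, not from placing a branch point at infinity.
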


The proof of this result essentially follows, via our previous results, from the determination of the dimension of $h^0(L)$ for all theta characteristic on an hyperelliptic curve; this is a classical computation in algebraic geometry done via the theory of Weierstrass points \cite{ACGH}, \cite{BS}.
\\
\par
We then focus our attention on the case of automorphism of prime odd order. In this case, there is only one invariant theta characteristic $L_0$ \cite{KS}, and we denote the corresponding self-conjugate spin$^c$ structure by $\spin_0$. We have the following concrete computation.
\begin{thm}\label{compprime}
Consider an automorphism $\varphi$ of $\Sigma$ with prime odd order with $\Sigma/\varphi=\mathbb{P}^1$. Then the spectrum of the action of $\tilde{\varphi}^*$ on $H^0(L_0)$ can be explicitly determined in terms of rotation angles $d\varphi_p\in S^1$ at the fixed points $p$ of $\varphi$. As a consequence, the Floer homology groups of $(M_{\varphi},\spin_0)$ can be determined explicitly in terms of the ramification data of $\varphi$.
\end{thm}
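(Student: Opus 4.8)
\emph{Sketch of the argument.} The plan is to reduce, by the main computation, to determining the complex representation $H^0(\Sigma,L_0)$ of the cyclic group $\langle\varphi\rangle$, and then to compute this representation from the fixed-point data of $\varphi$ via the equivariant index theorem. First, Theorem \ref{main} and its corollaries Theorems \ref{thmlocal} and \ref{thmnonlocal} express all flavours of $\HMt_{\bullet}(M_{\varphi},\spin_0)$, together with the $U$- and $H^1$-actions, through the multiset of eigenvalues $e^{i\pi r_1},\dots,e^{i\pi r_k}$ of $\tilde{\varphi}^{*}$ on $H^0(\Sigma,L_0)$; since $d$ is odd the relevant lift $\tilde\varphi$ has order $d$, so this is the same as the multiplicities $a_\ell=\dim H^0(\Sigma,L_0)_\ell$ of the characters $\varphi\mapsto\zeta^\ell$, where $\zeta=e^{2\pi i/d}$. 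Thus it suffices to compute the $a_\ell$ from ramification data. Because $d$ is prime, $\pi\colon\Sigma\to\mathbb{P}^1$ is a cyclic cover totally ramified over $N=2+\tfrac{2g}{d-1}$ points $q_1,\dots,q_N$ (Riemann--Hurwitz), with fibres the fixed points $P_1,\dots,P_N$ of $\varphi$ --- the same for every non-trivial power $\varphi^n$; in the model $z^d=p(w)$ the branch points carry local monodromy integers $c_1,\dots,c_N\in\{1,\dots,d-1\}$, and $c_j\bmod d$ is exactly what the rotation angle records, via $d\varphi_{P_j}=\zeta^{c_j^{-1}}$. Finally, the unique invariant theta characteristic is $L_0=\pi^{*}\mathcal{O}(-1)\otimes\mathcal{O}\bigl(\tfrac{d-1}{2}\sum_{j}P_j\bigr)$, and from this the action of $\tilde\varphi$ on each fibre $L_0|_{P_j}$ --- the unique $d$-th root of the canonical action on $K_\Sigma|_{P_j}$ --- is read off from $d\varphi_{P_j}$.

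Next I would feed this fixed-point data into the Atiyah--Bott holomorphic Lefschetz formula for the Dolbeault complex of $L_0$; since $L_0^{\otimes2}\cong K_\Sigma$ this is precisely the $G$-spin index theorem on $\Sigma$. For each $1\le n\le d-1$ it gives
\begin{equation*}
\operatorname{tr}\bigl(\tilde\varphi^{n*}\mid H^0(L_0)\bigr)-\operatorname{tr}\bigl(\tilde\varphi^{n*}\mid H^1(L_0)\bigr)=\sum_{j=1}^{N}\frac{\mu_{P_j}^{\,n}}{1-(d\varphi_{P_j})^{-n}},
\end{equation*}
with $\mu_{P_j}$ the fibrewise eigenvalue above (note all these fixed points are simple for $\varphi^n$ when $n\neq0$). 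Equivariant Serre duality identifies $H^1(L_0)\cong H^0(K_\Sigma\otimes L_0^{-1})^{*}\cong H^0(L_0)^{*}$ as $\langle\varphi\rangle$-representations --- the identification is $\tilde\varphi$-equivariant precisely because $\tilde\varphi$ is compatible with $L_0^{\otimes2}\cong K_\Sigma$ --- so, $\tilde\varphi^{*}$ having finite order, $\operatorname{tr}(\tilde\varphi^{n*}\mid H^1(L_0))=\overline{\operatorname{tr}(\tilde\varphi^{n*}\mid H^0(L_0))}$ and the left side of the displayed identity equals $2i\,\mathrm{Im}\,\operatorname{tr}(\tilde\varphi^{n*}\mid H^0(L_0))$. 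In other words Atiyah--Bott pins down the antisymmetric part $a_\ell-a_{d-\ell}$ of the representation explicitly in terms of the $d\varphi_{P_j}$.

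The only self-dual representations of $\mathbb{Z}/d$ (for $d$ odd) being the multiples of the trivial one, it remains to determine $a_0$ and the sums $a_\ell+a_{d-\ell}$, which I would do by a direct computation on the quotient. Applying the projection formula to $L_0=\pi^{*}\mathcal{O}(-1)\otimes\mathcal{O}\bigl(\tfrac{d-1}{2}\sum_{j}P_j\bigr)$ and using the standard eigensheaf decomposition of $\pi_{*}\mathcal{O}_\Sigma$ (and of $\pi_{*}\mathcal{O}_\Sigma(k\sum_{j}P_j)$) for a cyclic cover, one obtains $H^0(\Sigma,L_0)_\ell\cong H^0(\mathbb{P}^1,\mathcal{O}(d_\ell))$ for integers $d_\ell$ given by explicit floor-functions of $(c_1,\dots,c_N)$, whence $a_\ell=\max(d_\ell+1,0)$; the Atiyah--Bott output of the previous step then doubles as a cross-check on the $d_\ell$. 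This determines the full spectrum of $\tilde\varphi^{*}$ on $H^0(L_0)$, and substituting it into Theorems \ref{thmlocal} and \ref{thmnonlocal} yields $\HMt_{\bullet}(M_{\varphi},\spin_0)$, the reduced group $\mathit{HM}_{\bullet}(M_{\varphi},\spin_0)$, and all the numerical invariants --- in particular $h^0(L_0)$, $c_{L_0}$ and $\Delta_{L_0}$ --- explicitly in terms of the $c_j$, i.e.\ of the ramification data of $\varphi$.

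I expect the one genuine difficulty to be bookkeeping of the error-prone kind: fixing once and for all compatible sign and root conventions relating (i) the local monodromy integer $c_j$, (ii) the rotation angle $d\varphi_{P_j}\in S^1$, and (iii) the induced fibrewise eigenvalue $\mu_{P_j}$ on the square root $L_0|_{P_j}$, and then verifying that the floor-function expressions for the $d_\ell$ reproduce the residue sums coming out of the Atiyah--Bott formula. A second point requiring care is the choice of lift: distinct lifts $\tilde\varphi$ rotate the whole spectrum by a common $d$-th root of unity, so one must commit to the order-$d$ lift and track it in order to normalise the exponents as $0<|r_1|<\dots<|r_k|<1$ in Theorems \ref{thmlocal} and \ref{thmnonlocal}.
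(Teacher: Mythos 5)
Your proposal is correct, but it follows a different (and longer) route than the paper for the key step of nailing the full spectrum. The paper's proof uses only the single character value $\chi(\tilde{\varphi}^*)=\mathrm{tr}(\tilde{\varphi}^*\lvert_{\mathcal{H}^+})-\mathrm{tr}(\tilde{\varphi}^*\lvert_{\mathcal{H}^-})$, computed from the rotation angles via the signed $G$-spin formula of \cite{RRT} (which resolves exactly the lift/sign bookkeeping you flag as the main danger, by committing to the order-$d$ lift of Lemma \ref{liftorder}); it then extracts the integer coefficients from this one complex number using the $\mathbb{Q}$-linear independence of $\zeta^{i}-\zeta^{-i}$, $i=1,\dots,(p-1)/2$ --- this is where primality enters --- and, crucially, invokes Lemma \ref{noconj}: since $\Sigma/\varphi=\mathbb{P}^1$, no conjugate pair $\zeta^{\ell},\zeta^{-\ell}$ (and no eigenvalue $\pm1$) can occur, so $\min(a_\ell,a_{d-\ell})=0$ and the ``antisymmetric part'' $a_\ell-a_{d-\ell}$ you obtain from the Lefschetz/Serre-duality argument already \emph{is} the whole spectrum. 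You did not use this observation, and instead determine the symmetric part by identifying $L_0=\pi^{*}\mathcal{O}(-1)\otimes\mathcal{O}\bigl(\tfrac{d-1}{2}\sum_j P_j\bigr)$ (which needs the uniqueness of the invariant theta characteristic, cf.\ \cite{KS}, or a direct check) and decomposing its pushforward to $\mathbb{P}^1$ into eigensheaves with floor-function degrees. That computation is classical and valid for cyclic covers of $\mathbb{P}^1$, and in fact it alone yields all the multiplicities $a_\ell$, so in your scheme the fixed-point formula is really only a cross-check; what it costs is exactly the convention-matching and floor-function bookkeeping you anticipate, which the paper's route avoids entirely. Conversely, your use of the Lefschetz numbers of all powers $\varphi^{n}$ sidesteps the linear-independence lemma, while the paper needs it because it only evaluates the character at the generator.
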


Again, the description of the explicit procedure requires some additional setup; let us point out some general consequences, starting from the case of automorphisms of order $3$.
\begin{prop}\label{order3}
Consider an automorphism $\varphi$ of $\Sigma$ with order $3$ with $\Sigma/\varphi=\mathbb{P}^1$. Denote by $n_{\pm}$ the number of fixed points $p$ at which $d\varphi_p=e^{\pm 2\pi i/3}$ respectively. Then for $[\eta]\neq0$ we have
\begin{equation*}
\HMt_{\bullet}(M_{\varphi},\spin_0;\Gamma_\eta)= \mathbb{R}[U]/U^{n}.
\end{equation*}
where $n=|n_+-n_-|/3$.
\end{prop}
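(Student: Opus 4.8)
The plan is to derive this from Theorem~\ref{thmlocal} together with the holomorphic Lefschetz computation underlying Theorem~\ref{compprime}. By Theorem~\ref{thmlocal}, the module $\HMt_{\bullet}(M_{\varphi},\spin_0;\Gamma_\eta)$ is pinned down once we know $h^0(L_0)$ and the number $c_{L_0}$ of sign changes in the eigenvalue sequence of $\tilde\varphi^{*}$ on $H^0(L_0)$, so I would first determine those eigenvalues qualitatively. Since $d=3$ is odd we may take $\tilde\varphi=g^{2}$, where $g$ is the natural $\varphi$-action on $K$, so that $\tilde\varphi^{\otimes 2}=g^{\otimes 4}=g$ is compatible with $L_0^{2}\cong K$ and $\tilde\varphi$ has order $3$; hence all eigenvalues of $\tilde\varphi^{*}$ on $H^0(L_0)$ are cube roots of unity. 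By the discussion in Section~\ref{prelim} the value $1$ is not among them and $\zeta=e^{2\pi i/3}$ and $\bar\zeta$ cannot both occur, so every eigenvalue equals $\zeta$, or every eigenvalue equals $\bar\zeta$. In either case $k=1$, the sequence $r_1=\pm 2/3$ has no sign change, $c_{L_0}=0$, and Theorem~\ref{thmlocal} gives that $\HMt_{\bullet}(M_{\varphi},\spin_0;\Gamma_\eta)$ is cyclic over $\mathbb{R}[U]$ of $\mathbb{R}$-dimension $h^0(L_0)$, i.e.\ isomorphic to $\mathbb{R}[U]/U^{h^0(L_0)}$. It then remains to show $h^0(L_0)=|n_+-n_-|/3$.

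To compute $h^0(L_0)$ I would apply the Atiyah--Bott holomorphic Lefschetz fixed point theorem to $\tilde\varphi$ on $(\Sigma,L_0)$, equating the equivariant Euler characteristic $\mathrm{tr}(\tilde\varphi^{*}\,|\,H^0(L_0))-\mathrm{tr}(\tilde\varphi^{*}\,|\,H^1(L_0))$ with $\sum_p \tfrac{\mathrm{tr}(\tilde\varphi\,|\,L_0|_p)}{1-(d\varphi_p)^{-1}}$, the sum over the fixed points of $\varphi$ (which coincide with those of $\varphi^{2}$ since the order is prime). On the left, $\deg L_0=g-1$ and Riemann--Roch give $h^1(L_0)=h^0(L_0)$, while the equivariant isomorphism $K\otimes L_0^{-1}\cong L_0$ lets me identify, via Serre duality, $H^1(\Sigma,L_0)\cong H^0(\Sigma,L_0)^{*}$ as $\tilde\varphi$-modules, so the eigenvalues on $H^1$ are the conjugates of those on $H^0$; in the case all eigenvalues equal $\zeta$ the left side is therefore $h^0(L_0)(\zeta-\bar\zeta)=i\sqrt3\,h^0(L_0)$. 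On the right, $d\varphi_p$ is a primitive cube root of unity, and tracing the compatibility $\tilde\varphi^{\otimes 2}\cong g$ on $L_0^{2}|_p=T^{*}_p\Sigma$ identifies $\mathrm{tr}(\tilde\varphi\,|\,L_0|_p)=d\varphi_p$; an elementary calculation with cube roots then shows that a fixed point with $d\varphi_p=e^{2\pi i/3}$ contributes $+\tfrac{i}{\sqrt3}$ and one with $d\varphi_p=e^{-2\pi i/3}$ contributes $-\tfrac{i}{\sqrt3}$, so the right side equals $\tfrac{i}{\sqrt3}(n_+-n_-)$. Comparing the two sides gives $h^0(L_0)=(n_+-n_-)/3$, and the mirror computation in the case all eigenvalues equal $\bar\zeta$ gives $h^0(L_0)=(n_--n_+)/3$; together these yield $h^0(L_0)=|n_+-n_-|/3$ (so that the sign of $n_+-n_-$ records which eigenvalue occurs, and the group vanishes exactly when $n_+=n_-$).

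The step I expect to be the main obstacle --- indeed the only delicate one --- is the equivariant bookkeeping: correctly expressing the fiber action of the lift $\tilde\varphi$ on $L_0|_p$ in terms of the rotation number $d\varphi_p$, checking that $K\otimes L_0^{-1}\cong L_0$ and hence Serre duality are equivariant for exactly this lift rather than a twist of it, and fixing the sign/orientation conventions in the holomorphic Lefschetz formula so that the two sides computed above genuinely match. Once these normalizations are settled the rest is the elementary arithmetic indicated above, and the fact that the resulting count $|n_+-n_-|/3$ is a nonnegative integer provides a reassuring consistency check.
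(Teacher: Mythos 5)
Your proposal is correct, and at the level of strategy it is the same as the paper's: use Lemma \ref{liftorder} (odd order lift of the same order) and Lemma \ref{noconj} to see that $\tilde\varphi^*$ acts on $H^0(L_0)$ by a single primitive cube root of unity, so the group with $\Gamma_\eta$-coefficients is cyclic of length $h^0(L_0)$ (the paper gets this from Example \ref{ex1}, you from Theorem \ref{thmlocal}; both work, and the $h^0=0$ case is the empty-sequence/zero-module case), and then compute $h^0(L_0)$ by equating an equivariant index with a sum of local contributions at the fixed points. The one genuine difference is the fixed-point input: the paper quotes the $G$-spin theorem in the form of Theorem $8.1$ of \cite{RRT}, where the sign of the local $\csc(\tilde\theta)$ contribution has been determined, whereas you use the holomorphic Lefschetz formula for the lift acting on $L_0$ and resolve the square-root ambiguity in the fiber action by the order-$3$ constraint ($\mathrm{tr}(\tilde\varphi|_{L_0|_p})$ must be the unique cube root of unity squaring to the action on $K_p$). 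That is a legitimate and more self-contained route; in complex dimension one it essentially reproves the sign formula of \cite{RRT}, at the cost of the convention bookkeeping you flagged. Two small corrections for a final write-up: (i) ``$\tilde\varphi=g^2$'' does not literally define a lift, since $g^2$ is an automorphism of $K$ covering $\varphi^2$ rather than of $L_0$ covering $\varphi$; the existence of an order-$3$ lift is exactly Lemma \ref{liftorder} and should just be cited. (ii) Your local data mix conventions: with the standard pullback conventions the Atiyah--Bott denominator is $\det(1-d\varphi_p)=1-d\varphi_p$ and the fiber trace is $(d\varphi_p)^2=\overline{d\varphi_p}$, so a fixed point with $d\varphi_p=e^{2\pi i/3}$ contributes $-i/\sqrt3$ and one finds $h^0(L_0)=(n_--n_+)/3$ when all eigenvalues of $\tilde\varphi^*$ equal $e^{2\pi i/3}$, matching the paper; your version (numerator $d\varphi_p$, denominator $1-(d\varphi_p)^{-1}$) is the consistent $\varphi^{-1}$/pushforward convention, which conjugates the spectrum and swaps the roles of $n_+$ and $n_-$. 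Since the statement only involves $|n_+-n_-|$, this does not affect the conclusion, but the pairing of sign with rotation angle should be stated under one fixed convention.
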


In all concrete examples we have seen so far the Floer group with twisted coefficient is cyclic; going only slightly further, we find examples that show that this is not always the case.

\begin{prop}\label{order5}
There exist automorphisms $\varphi$ of order $5$ for which $\HMt_{\bullet}(M_{\varphi},\spin_0;\Gamma_\eta)$ is not a cyclic $\mathbb{R}[U]$-module; in fact, for every $n,m\geq 1$ there exists an automorphism with
\begin{equation*}
\HMt_{\bullet}(M_{\varphi},\spin_0;\Gamma_\eta)= \mathbb{R}[U]/U^{n}\oplus \mathbb{R}[U]/U^{m}
\end{equation*}
where we can require that either the top generators or the bottom generators of the summand lie in the same absolute grading.
\end{prop}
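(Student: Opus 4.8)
The plan is to reverse-engineer the computation behind Theorem \ref{compprime}. That computation takes the ramification data of $\varphi$ (the rotation numbers $d\varphi_p$ at the fixed points, constrained by Riemann--Hurwitz and Kummer theory), produces from it via the Atiyah--Bott $G$-spin theorem the $\tilde\varphi^\ast$-character of $H^0(L_0)$, hence the list of eigenvalue exponents $r_1,\dots,r_k$ with multiplicities $m_1,\dots,m_k$, and feeds this list into the explicit chain complex of Theorem \ref{main}, from which $\HMt_{\bullet}(M_{\varphi},\spin_0;\Gamma_\eta)$ is read off through Theorems \ref{thmlocal} and \ref{thmnonlocal}. So it suffices to produce $\mathbb{Z}/5$-curves whose ramification yields the desired $\mathbb{R}[U]$-module. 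Existence is the Riemann existence theorem: a $\mathbb{Z}/5$-cover of $\mathbb{P}^1$ branched over $r\geq 3$ points with local monodromies $\zeta^{b_1},\dots,\zeta^{b_r}$, $b_j\in(\mathbb{Z}/5)^\times$, exists exactly when $\sum_j b_j\equiv 0\pmod 5$, and then $g=2r-4$ by Riemann--Hurwitz; since $h^0(L_0)\leq (g+1)/2$ by Clifford, realizing $h^0(L_0)=n+m$ both requires and is made possible by taking $r$ large.

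The structural point is that since $\varphi$ has odd prime order, its canonical $\mathbb{Z}/5$-equivariant lift $\tilde\varphi$ to $L_0$ has order exactly $5$, so the exponents lie in $\{\pm\tfrac25,\pm\tfrac45\}$: only the absolute values $\tfrac25$ and $\tfrac45$ occur, and because $\Sigma/\varphi=\mathbb{P}^1$ at most one of $\chi_\ell,\bar\chi_\ell$ occurs for each $\ell\neq 0$ (the case $\ell=1$ being the statement before Theorem \ref{thmlocal}; indeed $s\in H^0(L_0)^{\chi_\ell}$, $t\in H^0(L_0)^{\chi_{-\ell}}$ would give $0\neq st\in H^0(K)^{\chi_0}$, which vanishes for a $\mathbb{P}^1$ quotient), and $H^0(L_0)^{\chi_0}$ is generically zero. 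Thus $H^0(L_0)$ is supported on at most one character of absolute value $\tfrac25$ and one of absolute value $\tfrac45$; to make the module noncyclic one must force both nonzero with exponents of opposite sign, i.e. land in the configuration $\{\chi_1,\chi_3\}$ (or $\{\chi_2,\chi_4\}$). Then $k=2$, the sequence $r_1,r_2$ has exactly one change of sign, $c_L=1$, and Theorem \ref{thmlocal} gives rank $2$ over $\mathbb{R}[U]$ and total dimension $m_1+m_2$; reading off the complex of Theorem \ref{main} --- equivalently, combining $c_L=1$ with the computation $\Delta_L=\max(m_1,m_2)$ that enters Theorem \ref{thmnonlocal} --- identifies the group with $\mathbb{R}[U]/U^{m_1}\oplus\mathbb{R}[U]/U^{m_2}$. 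In this configuration the vanishing of the other isotypic parts is automatic, so one only has to prescribe $m_1=\dim H^0(L_0)^{\chi_1}=n$ and $m_2=\dim H^0(L_0)^{\chi_3}=m$ (with $\dim H^0(L_0)^{\chi_0}=0$). This is a mild constraint: the Atiyah--Bott/Chevalley--Weil count writes each $\dim H^0(L_0)^{\chi_\ell}$ as a sum of explicit local terms over the branch points --- for the curves $y^5=f(x)$ simply as a count of monomials in an explicit basis of eigendifferentials --- so adjoining branch points of suitable types (preserving $\sum_j b_j\equiv 0$) moves $(m_1,m_2)$ in a controlled way, and a short combinatorial argument shows every $(n,m)$ with $n,m\geq 1$ is attained. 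Taking $n=m=1$ gives $\mathbb{R}[U]/U\oplus\mathbb{R}[U]/U$, which is not a cyclic $\mathbb{R}[U]$-module: this is the first assertion.

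For the grading refinement, each reducible generator in the complex of Theorem \ref{main} carries an absolute $\mathbb{Q}$-grading determined --- up to the global shift of the remark following Theorem \ref{main} --- by its eigenvalue exponent and multiplicity, so the relative shift between the summands $\mathbb{R}[U]/U^{m_1}$ and $\mathbb{R}[U]/U^{m_2}$ is an explicit function of $(r_1,r_2,m_1,m_2)$ read from the grading formula of Section \ref{chain}. I would compute it and check it aligns, say, the top generators of the two summands. Passing from $\varphi$ to $\varphi^{-1}$ reverses the orientation of $M_\varphi$, hence sends this finite-dimensional group to its graded dual, which interchanges the top and bottom generators of each summand; so $\varphi$ and $\varphi^{-1}$ together realize both alternatives in the statement.

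The step I expect to be the main obstacle is the bookkeeping of the two middle steps: making the Atiyah--Bott/Chevalley--Weil count completely explicit for the theta characteristic $L_0$ (the formula for $H^0(\Omega^1)$ is classical, but the ``square-root'' version needs care), verifying that the isotypic profile of $H^0(L_0)$ can be forced onto exactly one admissible pair of characters with prescribed dimensions, and --- most delicately --- pinning down the relative grading shift of the two summands, which uses the absolute gradings of the reducibles and not merely their differences within a single tower.
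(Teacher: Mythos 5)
Your proposal is correct and follows essentially the same route as the paper: the $G$-spin theorem applied to the fixed-point data of a $\mathbb{Z}/5$-curve $y^5=f(x)$ determines the $\tilde{\varphi}^*$-spectrum on $H^0(L_0)$, which by Lemma \ref{noconj} contains at most one eigenvalue from each conjugate pair $e^{\pm 2\pi i/5}$, $e^{\pm 4\pi i/5}$ and must contain one of each with exponents of opposite sign to force rank two, after which the module and the grading alignment are read off from the chain complex as in Examples \ref{ex3} and \ref{ex4}. The only minor differences are that the paper sidesteps the explicit equivariant (Chevalley--Weil-type) count you flag as delicate by computing only the character $\chi(\tilde{\varphi}^*)$ via the half-angle formula of \cite{RRT} and recovering the full spectrum from the $\mathbb{Q}$-linear independence of $\zeta^i-\zeta^{-i}$, and it realizes both the ``top'' and ``bottom'' alternatives directly by choosing the sign configuration, your $\varphi\mapsto\varphi^{-1}$ duality observation being exactly the paper's remark that Examples \ref{ex3} and \ref{ex4} are exchanged by orientation reversal.
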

\begin{remark}
We will see that in the case of order $5$ maps, this exhaust all possibilities allowed; more generally, looking at larger primes $p$, our method allows to find automorphisms with arbitrarily large rank as an $\mathbb{R}[U]$-module.
\end{remark}
The key observation behind Theorem \ref{compprime} is that under the correspondence (\ref{atiobs}), the space of holomorphic sections of $L$ corresponds to the space of harmonic spinors (the kernel of the Dirac operator). One can then infer information about the action of $\tilde{\varphi}^*$ using the Atiyah-Bott $G$-spin theorem \cite{AB}, which relates the trace of the action to local information at the fixed points. While this is in general not sufficient to compute the whole spectrum of $\tilde{\varphi}^*$, it turns out that under our assumptions (prime order with quotient $\mathbb{P}^1$) it is.

\vspace{0.3cm}
\subsection*{Computations in complex and symplectic geometry} We have seen how one can use the understanding of the geometry of curves and their automorphisms to compute the corresponding topological invariants of three-manifolds; we now discuss some instances in which the opposite procedure leads to interesting results.
\\
\par
\textit{Computations of $H^0(L)$.} It is well known that $h^0(L)=\mathrm{dim}_{\mathbb{C}}H^0(L)$ is not invariant under deformations of the complex structure on $\Sigma$ \cite{Hit}. On the other hand, an amusing consequence of Theorem \ref{main} is a `topological' proof that for a $\varphi$-invariant theta characteristic $L$, $h^0(L)$ is invariant under $\varphi$-invariant deformations of the complex structure. As mentioned earlier, in general the $G$-spin theorem is not enough to compute the action of $\tilde{\varphi}^*$ on $H^0(L)$, and in fact not even its dimension. One can instead use Theorem \ref{main} to obtain information about $H^0(L)$ and the action of $\tilde{\varphi}^*$ on it, provided one can compute the relevant Floer homology groups from other topological manners.
\par
Let us first point out that our results imply that $h^0(L)$ can be computed purely in terms of `classical' topological invariants.
\begin{thm}\label{turaev}
Suppose $\Sigma/\varphi=\mathbb{P}^1$, and consider a $\varphi$-invariant theta characteristic $L$. Then $h^0(L)$ is the Reidemeister-Turaev torsion $\tau(M_{\varphi},\spin_L)$.
\end{thm}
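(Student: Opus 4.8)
The plan is to deduce Theorem~\ref{turaev} from Theorem~\ref{thmlocal} by decategorifying: compute the Euler characteristic of the twisted Floer group and match it against the torsion via the Meng--Taubes theorem. The input I would use from monopole Floer theory is the following standard fact. For a three-manifold $Y$ with $b_1(Y)=1$, a torsion spin$^c$ structure $\spin$, and a $1$-cycle $\eta$ with $[\eta]\neq 0\in H^1(Y;\mathbb{R})$, the group $\HMt_\bullet(Y,\spin;\Gamma_\eta)$ is finite dimensional over $\mathbb{R}$ --- the non-trivial period of $\Gamma_\eta$ forces the reducible locus to contribute nothing, so that the bar flavor $\overline{\mathit{HM}}_\bullet(Y,\spin;\Gamma_\eta)$ vanishes, all three flavors agree, and no chamber ambiguity enters --- and its $\mathbb{Z}/2$-graded Euler characteristic equals, up to the overall sign fixed by a homology orientation, the Reidemeister--Turaev torsion $\tau(Y,\spin)$, with Euler structure determined by $\spin$. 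This is the monopole-theoretic incarnation of the Meng--Taubes theorem; I would cite \cite{KM} together with the standard literature relating Seiberg--Witten invariants to Turaev torsion.

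Granting this, the argument is short. First I would observe that $\Sigma/\varphi=\mathbb{P}^1$ forces $b_1(M_\varphi)=1$ and that $\spin_L$, being self-conjugate, is torsion, so the fact above applies to $(M_\varphi,\spin_L)$ for any $[\eta]\neq 0$ and gives $\tau(M_\varphi,\spin_L)=\pm\,\chi\bigl(\HMt_\bullet(M_\varphi,\spin_L;\Gamma_\eta)\bigr)$. Next I would invoke Remark~\ref{Z2grad}: the group $\HMt_\bullet(M_\varphi,\spin_L;\Gamma_\eta)$ lies entirely in even $\mathbb{Z}/2$-grading, so its Euler characteristic is simply $+\dim_{\mathbb{R}}\HMt_\bullet(M_\varphi,\spin_L;\Gamma_\eta)$; in particular the sign is $+1$ and the torsion is a non-negative integer. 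Finally Theorem~\ref{thmlocal} evaluates this dimension as $h^0(L)$, so $\tau(M_\varphi,\spin_L)=h^0(L)$. One can equivalently argue at the chain level: by Theorem~\ref{main} the Floer complex of $(M_\varphi,\spin_L;\Gamma_\eta)$ has all of its generators in even grading, so its Euler characteristic is its total rank, and matching this against the chain-level Meng--Taubes formula sidesteps even the appeal to Theorem~\ref{thmlocal}.

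I do not expect a genuine obstacle here, since the analytic content lives entirely in Theorems~\ref{main} and~\ref{thmlocal}. The one point that requires care is bookkeeping: pinning down the normalization of the Reidemeister--Turaev torsion (the correspondence between spin$^c$ structures and Turaev's Euler structures, the choice of homology orientation, and hence the overall sign) so that the comparison is an honest equality of integers, rather than merely an equality up to sign and multiplication by a group element. The grading information in Remark~\ref{Z2grad} is precisely what removes the sign ambiguity. A secondary, routine check is that the usual $b_1=1$ wall-crossing plays no role, which it does not, because $\HMt_\bullet(M_\varphi,\spin_L;\Gamma_\eta)$ with $[\eta]\neq 0$ is chamber-independent.
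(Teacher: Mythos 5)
Your strategy coincides with the paper's: decategorify Theorem \ref{thmlocal} using Remark \ref{Z2grad} so that the Euler characteristic equals the dimension, then appeal to the Seiberg--Witten-equals-torsion literature. However, there is a genuine gap in the step you present as a citable ``standard fact.'' What Meng--Taubes, Turaev and Nicolaescu actually identify with the Reidemeister--Turaev torsion is the Euler characteristic of the Floer homology defined with a \emph{non-exact} perturbation, i.e.\ $\HMt_{\bullet}(M_{\varphi},\spin_L,c;\Gamma)$ with period class $c\neq 0$. For a torsion spin$^c$ structure, relating this to the unperturbed (balanced) theory you are computing is exactly the content of \cite[Theorem 31.1.3]{KM} (cf.\ the discussion in \cite[Section 42.5]{KM}), and that isomorphism is established only for suitably complete Novikov-type local systems $\Gamma$ as in \cite[Section 30.2]{KM} --- not for the real local system $\Gamma_\eta$, for which the non-exactly perturbed theory is not even covered by that theorem. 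This is precisely where the chamber/wall-crossing issue you dismiss as ``playing no role'' re-enters: the vanishing of $\overline{\mathit{HM}}$ with $\Gamma_\eta$ coefficients does not by itself connect $\chi\bigl(\HMt_{\bullet}(M_{\varphi},\spin_L;\Gamma_\eta)\bigr)$ to the torsion, so the asserted equality is not citable as stated.

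The repair is exactly the paper's route: note that the chain-level computation underlying Theorem \ref{thmlocal} and Remark \ref{Z2grad} holds verbatim when $\Gamma_\eta$ is replaced by a Novikov-type system $\Gamma$, giving $h^0(L)=\chi\bigl(\HMt_{\bullet}(M_{\varphi},\spin_L;\Gamma)\bigr)$ with the sign fixed by the even $\mathbb{Z}/2$-grading; then invoke the isomorphism $\HMt_{\bullet}(M_{\varphi},\spin_L;\Gamma)\cong\HMt_{\bullet}(M_{\varphi},\spin_L,c;\Gamma)$ and the known identification of the latter's Euler characteristic with $\tau(M_{\varphi},\spin_L)$ from \cite{MT}, \cite{Tur}, \cite{Nic}. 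With this coefficient change inserted your argument goes through; as written, the bridge from the $\Gamma_\eta$-group to the torsion is missing.
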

A concrete computational method for the Reidemeister-Turaev torsion of mapping tori (in terms of Fox derivatives) can be found in \cite[Section VII$.5.2$]{TurBook}. Theorem \ref{turaev} follows because Theorem \ref{thmlocal} and Remark \ref{Z2grad} also hold if we replace $\Gamma_{\eta}$ with suitable Novikov-type local coefficient systems $\Gamma$ as introduced in \cite[Section $30.2$]{KM}. We then have
\begin{equation*}
h^0(L)=\chi\left(\HMt_{\bullet}(M_{\varphi},\spin_L;\Gamma)\right).
\end{equation*}
The latter group is isomorphic to the version with a non-exact perturbation $\HMt_{\bullet}(M_{\varphi},\spin_L,c;\Gamma)$ (cf. the discussion in \cite[Section $42.5$]{KM}) whose Euler characteristic is well-known to be the same as the corresponding Reidemeister-Turaev torsion invariant \cite{MT}, \cite{Tur} (see also \cite{Nic}). 
\begin{example}Consider the torus knot $T_{p,q}$, where $p,q$ are coprime integers. The zero surgery $S^3_0(T_{p,q})$ is the mapping torus of an automorphism $\varphi$ of order $pq$ of a surface of genus $(p-1)(q-1)/2$, leaving fixed a unique theta characteristic $L_0$ (because $H_1(S^3_0(T_{p,q});\mathbb{Z})=\mathbb{Z}$). Writing the symmetrized Alexander polynomial of $T_{p,q}$ as
\begin{equation*}
\Delta_{T_{p,q}}(T)=T^{(p-1)(q-1)/2}\frac{(1-T)(1-T^{pq})}{(1-T^p)(1-T^q)}=a_0+\sum_{i>0}a_i(T^i+T^{-i}),
\end{equation*}
Theorem \ref{turaev} then implies the relation
\begin{equation*}
h^0(L_0)=\sum_{i>0}ia_i.
\end{equation*}
Notice that in the case of the torus knot $T_{2,3}$ the $0$-surgery (for which $h^0(L_0)=1$) is one of the flat manifolds discussed in \cite{KM}.
\end{example}
To go beyond the computation of $h^0(L)$, notice that the arguments in \cite{OSd}, which are based on the surgery exact triangle \cite{KMOS}, also imply that $\HMt_{\bullet}(S^3_0(T_{p,q}),\spin_{L_0};\Gamma_\eta)$ is cyclic as a $\mathbb{R}[U]$-module. We can therefore conclude from our theorem that all the eigenvalues of the action of $\tilde{\varphi}^*$ on $H^0(L_0)$ lie either in the upper or lower half-plane. More generally, one can in principle combine the very flexible computational tools of Heegaard Floer homology together with its isomorphism with monopole Floer homology (see \cite{KLT}, \cite{CGH}, and subsequent papers) to compute information about the action of $\tilde{\varphi}^*$ on $H^0(L)$ in other families of explicit examples.
\\
\par
\textit{Relation with periodic Floer homology. }Hutchings and Thaddeus (see \cite{HS}) defined the periodic Floer homology groups of an area preserving diffeomorphism of a surface; this construction, inspired by embedded contact homology, looks at unions of finite orbits of the map (unlike the usual fixed point Floer homology, which is defined in all dimensions, and only looks at fixed points). It was shown by Lee and Taubes \cite{LC} that this symplectic invariant is equivalent to the monopole Floer homology of the mapping torus $M_{\varphi}$, so that our main result implies computation of certain cases of such invariants. The most notable feature is that because we are dealing with torsion spin$^c$ structures $\spin_L$, the corresponding construction on the symplectic side always involves the \textit{non}-monotone (and indeed Calabi-Yau) case; on the monopole Floer homology side, this means that one needs again to work with a Novikov-type local system $\Gamma$, and appeal to the isomorphism for $\spin$ torsion $\HMt_*(Y,\spin,\Gamma)\cong \HMt_*(Y,\spin, c;\Gamma)$ where $c\neq 0$ is the class of a non-exact perturbation \cite[Theorem  $31.1.3$]{KM}.
\par
It would be interesting to find a direct link between the symplectic and algebraic geometry for such finite order classes; of course a subtlety is that a finite order automorphism is very far from having non-degenerate periodic orbits in the sense of \cite{HS}.

\vspace{0.3cm}
\subsection*{Structure of the paper. } In Section \ref{prelim} we recall the basic concepts relating spin structures and theta characteristic on Riemann surfaces with symmetries, and discuss the special features of having quotient $\mathbb{P}^1$. Section \ref{perturb} represents the technical core of the paper; we show that we can suitably perturb the equations while still maintaining a solid understanding of the geometry of the problem. From this, in Section \ref{chain} we provide a concrete description of the Floer chain complexes in terms of the spectrum of the action on $H^0(L)$, and discuss computations. Finally, in Section \ref{comp} we provide concrete computations for all automorphisms of prime order.

\vspace{0.3cm}
\subsection*{Acknowledgements. }
The author was partially supported by the Alfred P. Sloan foundation and NSF grant DMS-1948820.

\vspace{0.5cm}

\section{Preliminaries}\label{prelim}
Throughout the paper, it will be convenient to switch between holomorphic and spin geometry; we discuss some background on this dictionary here (See for example \cite{Hit}, \cite{Ati}, \cite{Roe} for more details).
\par
Consider a $2n$-dimensional Riemannian manifold $(X,g)$ equipped with a spin structure $\spin$. To this, we can associate a pair of spinor bundles $S^{\pm}\rightarrow X$ (which are $2^n$-dimensional hermitian bundles), and the corresponding (chiral) Dirac operators
\begin{equation*}
D^{\pm}:S^{\pm}\rightarrow S^{\mp}.
\end{equation*}
In the case $X$ is a Riemann surface $\Sigma$ of genus $g$, this construction can be interpreted purely in terms of K\"ahler geometry; this is because $\mathrm{SO}(2)=\mathrm{U}(1)$, and $\mathrm{Spin}(2)\rightarrow \mathrm{SO}(2)$ corresponds to the double cover of $\mathrm{U}(1)$ onto itself. Namely, a spin structure is the same as a choice of one of the $2^{2g}$ square roots $L$ of the canonical bundle (i.e. $L^2=K$); these are classically known as theta characterstics. One can then identify $S^+=L$ and $S^-=\bar{L}$ with the natural connections and the Dirac operator $D^+$ can be interpreted as
\begin{equation}\label{dirackahler}
D^+=\sqrt{2}\bar{\partial}: \Gamma(L)\rightarrow \Gamma(\bar{K}\otimes L)=\Gamma(\bar{L})
\end{equation}
where we identify $\bar{K}\otimes L=\bar{L}$.
\begin{remark}Throughout the paper, we use the standard conventions for complex geometry (see for example \cite{Huy}) so that $\bar{\partial}f=(df)^{0,1}$ is given by
\begin{equation*}
\bar{\partial} f=\frac{\partial f}{\partial \bar{z}}d\bar{z}\quad\text{with}\quad \frac{\partial f}{\partial \bar{z}}=\frac{1}{2}\left(\frac{d}{dx}+i\frac{d}{dy}\right).
\end{equation*}
The factor of $\sqrt{2}$ appears in (\ref{dirackahler}) because in local coordinates the Dirac operator is $d/dx+id/dy$, and $d\bar{z}$ has norm $\sqrt{2}$.
\end{remark}

The space of holomorphic sections $H^0(L)$ can be identified as the space of positive harmonic spinors $\mathcal{H}^+$. Similarly, $D^-$ is interpreted as the adjoint operator
\begin{equation*}
D^-=\sqrt{2}\bar{\partial}^*: \Gamma(\bar{L})=\Gamma(\bar{K}\otimes L)\rightarrow\Gamma({L}),
\end{equation*}
which gives us the identification of $H^1(L)$ with the space of negative harmonic spinors $\mathcal{H}^-$.
\\
\par
These ideas were used in \cite{Hit} to provide the first examples of manifolds where the dimension of space of harmonic spinors depends on the underlying metric. There, it is also shown that in general the dimension of $\mathcal{H}^{\pm}$ only depends on the conformal class of the metric, hence in the two-dimensional case only on the Riemann surface structure. On the other hand, the parity of $h^0(L)$ is independent of the underlying holomorphic structure (a theta characteristic is called even/odd accordingly). This was first shown by Mumford \cite{Mum}, and reinterpreted via mod $2$ index theory in the language we have just discussed by Atiyah \cite{Ati}.
\\
\par
Consider now a holomorphic automorphism $\varphi$ of $\Sigma$. We will be mostly be interested in the case when $g\geq 2$, so that $\varphi$ automatically has finite order; if the case of a surface of genus $1$ (which corresponds to the computations of flat three-manifolds in \cite{KM}) we will take this as an assumption. Then $\varphi$ acts on the set of spin structures on $\Sigma$ by pull-back with at least one fixed point \cite[Lemma $5.1$]{Ati}. Given such a fixed point $\spin$, $\varphi$ lifts to an automorphism of the spin structure $\tilde{\varphi}$; if $\varphi$ has order $e$, then $\tilde{\varphi}^e$ is a lift of the identity hence it is either $\pm \mathrm{id}$. This leads in general dimensions to a quite rich story, see \cite{AB}, \cite{Sha}; in the case of surfaces, the situation is quite simple, and we record some results we will use later.
\begin{lemma}\label{liftorder}
Suppose $\varphi$ fixes a spin structure $\spin$ on $\Sigma$.
\begin{itemize}
\item If the order of $\varphi$ is odd, then $\varphi$ admits a lift of the same order;
\item if the order of $\varphi$ is even, say $2m$, and $\varphi^m$ has at least one fixed point, then any lift $\varphi$ has order $4m$. 
\end{itemize}
In particular, the hyperelliptic involution lifts to a spin automorphism of order $4$, cf. \cite[Proposition 8.46]{AB}.
\end{lemma}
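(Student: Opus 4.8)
The plan is to argue entirely in the holomorphic picture of Section~\ref{prelim}. A $\varphi$-fixed spin structure is a square root $L$ of $K$ together with an isomorphism $\varphi^*L\cong L$ compatible with the canonical identification $\varphi^*K\cong K$ coming from $d\varphi$; a lift of $\varphi$ to the spin structure is a fibrewise-linear holomorphic bundle map $\tilde\varphi\colon L\to L$ covering $\varphi$ whose tensor square $\tilde\varphi^{\otimes 2}$ equals the canonical lift $\mathcal L_K$ of $\varphi$ to $K=L^2$. First I would record the elementary bookkeeping: such a $\tilde\varphi$ exists --- take any isomorphism $L\xrightarrow{\sim}\varphi^*L$, compose with the tautological map $\varphi^*L\to L$ to get a fibrewise-linear bundle map $L\to L$ covering $\varphi$, then rescale by a scalar so that $\tilde\varphi^{\otimes 2}=\mathcal L_K$ (this pins down the scalar up to sign) --- and there are exactly two compatible lifts, $\tilde\varphi$ and $-\tilde\varphi$. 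If $\varphi$ has order $e$, then $\tilde\varphi^e$ covers $\varphi^e=\mathrm{id}$, hence equals $\lambda\cdot\mathrm{id}_L$ for some $\lambda\in\mathbb C^*$; taking tensor squares and using $\mathcal L_K^{\,e}=\mathrm{id}_K$ (differentials compose, so $d(\varphi^e)=(d\varphi)^e=\mathrm{id}$) gives $\lambda^2=1$, whence $\tilde\varphi^e=\pm\mathrm{id}_L$ and the order of $\tilde\varphi$ is $e$ or $2e$.

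For the odd case this is enough: when $e$ is odd, $(-\tilde\varphi)^e=-\tilde\varphi^e$, so exactly one of the two compatible lifts has $e$-th power equal to $+\mathrm{id}_L$, and that lift has order exactly $e$ (it cannot be smaller since it covers $\varphi$).

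For the even case, write $e=2m$ and set $\psi=\varphi^m$, a nontrivial holomorphic involution with a fixed point $p$ by hypothesis. The heart of the argument is the action on the fibre $L_p$. Since $(d\psi_p)^2=d(\psi^2)_p=\mathrm{id}$ we have $d\psi_p=\pm1$, and $d\psi_p=+1$ is impossible: a local expansion $\psi(z)=z+a_kz^k+\cdots$ with $k\ge2$ minimal such that $a_k\neq0$ would give $\psi^2(z)=z+2a_kz^k+\cdots\neq z$, forcing $\psi=\mathrm{id}$ near $p$ and hence on all of $\Sigma$ --- contradicting that $\varphi$ has order $2m$. So $d\psi_p=-1$, and therefore $\mathcal L_K$ acts on $K_p$ by $d\psi_p=-1$. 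Now $\tilde\psi:=\tilde\varphi^m$ is a compatible lift of $\psi$, i.e.\ $\tilde\psi^{\otimes2}$ is the canonical lift of $\psi$ to $K$; it acts on $L_p$ by some $\beta\in\mathbb C^*$, hence on $K_p=L_p^{\otimes2}$ by $\beta^2$, so $\beta^2=-1$. Thus $\tilde\varphi^{2m}=\tilde\psi^2$ acts on $L_p$ by $\beta^2=-1$; since $\tilde\varphi^{2m}$ is a global scalar, $\tilde\varphi^{2m}=-\mathrm{id}_L$, so $\tilde\varphi$ (and likewise $-\tilde\varphi$) has order exactly $4m$. The hyperelliptic case is $m=1$: the hyperelliptic involution has $2g+2$ fixed points, so each of its spin lifts has order $4$.

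The first paragraph is routine equivariant-line-bundle bookkeeping. The only step requiring real care is the even case: pinning down that $d\psi_p=-1$ rather than $+1$, and tracking conventions so that $\mathcal L_K$ acts on $K_p$ by $d\psi_p$ and hence $\beta^2=-1$ rather than $\beta^2=+1$. The hypothesis that $\varphi^m$ has a fixed point enters precisely here and is the only genuine geometric input (it is also necessary: when $\varphi^m$ acts freely, lifts of order $2m$ can occur).
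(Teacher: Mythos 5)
Your proof is correct and follows essentially the same route as the paper: the sign bookkeeping $\tilde\varphi^{e}=\pm\mathrm{id}$ handles the odd case, and in the even case the fixed point of the involution $\varphi^m$ (where $d(\varphi^m)_p=-1$) forces the lift to act on the fiber as a fourth root of unity, hence $\tilde\varphi^{2m}=-\mathrm{id}$; the paper phrases this in the $\mathrm{Spin}(2)\to\mathrm{SO}(2)$ language while you use the equivalent theta-characteristic picture $L^{2}\cong K$, spelling out the routine existence and uniqueness of the two lifts. The only convention point to watch --- whether the canonical lift acts on $K_p$ by $d\psi_p$ or its inverse --- is immaterial here since $d\psi_p=-1$, so your conclusion $\beta^{2}=-1$ stands.
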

\begin{proof}
If the order $e$ is odd, then exactly one of $\tilde{\varphi}$ and $-\tilde{\varphi}$ has order $e$. Suppose then $e=2m$ and $\varphi^m$ has a fixed point. Around this point, $\varphi^m$ acts as a rotation by $\pi$; this implies that any lift $\tilde{\varphi}^m$ acts at that point as $\pm i$, so that it has order $4$, and $\tilde{\varphi}$ has order $4m$.
\end{proof}

\begin{example}\label{torus}
Consider for example the torus obtained by identifying the opposite side of a square with the product Lie spin structure; in this case the spinor bundles $S^{\pm}$ are trivial, and we can lift the actions
\begin{equation*}
z\mapsto -z\quad \text{and} \quad z\mapsto iz
\end{equation*}
of order $2$ and $4$ respectively as
\begin{align*}
(z,w_+,w_-)&\mapsto (-z,iw_+,-iw_-)\\
(z,w_+,w_-)&\mapsto (iz,e^{\pi i/4}w_+,e^{-\pi i/4}w_-).
\end{align*}
Similarly the actions of order $3$ and $6$ can be studied via the realization of the torus as the quotient of a regular hexagon.
\end{example}

\begin{remark}\label{conj}
It is true in general that the action of $\varphi$ on $S^{\pm}$ (which are conjugate bundles) are conjugate.
\end{remark}

Consider then the action of $\tilde{\varphi}$ on the space of holomorphic sections $H^0(L)$ or, equivalently, the space of positive harmonic spinors $\mathcal{H}^+$. Because $\tilde{\varphi}$ has finite order $d$, this action is diagonalizable with eigenvalues $d$-th roots of unity, and have the following.
\begin{defn}
The spectrum $\mathrm{spec}(\tilde{\varphi},\spin)$ is the set of eigenvalues of the action of $\tilde{\varphi}$ on $H^0(L)$, counted with multiplicity.
\end{defn}

A key observation for our analysis of transversality is the following.

\begin{lemma}\label{noconj}
Suppose the quotient of $\Sigma$ by $\varphi$ is $\mathbb{P}^1$. Then
\begin{equation*}
\mathrm{spec}(\tilde{\varphi},\spin)\cap \overline{\mathrm{spec}(\tilde{\varphi},\spin)}=\emptyset.
\end{equation*}
In particular, $\pm1$ do not belong to $\mathrm{spec}(\tilde{\varphi},\spin)$.
\end{lemma}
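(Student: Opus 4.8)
The plan is to argue by contradiction, exploiting that a theta characteristic squares to the canonical bundle: if $\mu$ and $\bar\mu$ both occur in $\mathrm{spec}(\tilde\varphi,\spin)$, then the product of the corresponding eigensections is a nonzero $\varphi$-invariant holomorphic one-form on $\Sigma$, and no such form exists once $\Sigma/\varphi=\mathbb{P}^1$. Concretely, suppose $\mu\in\mathrm{spec}(\tilde\varphi,\spin)$ and also $\bar\mu\in\mathrm{spec}(\tilde\varphi,\spin)$ -- this is exactly what a nonempty intersection $\mathrm{spec}\cap\overline{\mathrm{spec}}$ means, and the case of a real $\mu$, i.e. $\mu=\pm1$, is included. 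I would pick nonzero eigensections $s_1,s_2\in H^0(L)=\mathcal{H}^+$ with $\tilde\varphi$-eigenvalues $\mu$ and $\bar\mu$ respectively (taking $s_1=s_2$ when $\mu$ is real). Their tensor product $s_1\otimes s_2$ is a holomorphic section of $L^{\otimes2}$, and it is nonzero because $\Sigma$ is connected: in a local frame it is the product of two holomorphic functions, neither identically zero.

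Next I would move the picture to the canonical bundle. Since the lift $\tilde\varphi$ is chosen compatibly with the identification $L^{\otimes2}\cong K$, the action it induces on $H^0(L^{\otimes2})\cong H^0(K)$ is the standard geometric action of $\varphi$ on holomorphic one-forms (up to replacing $\varphi$ by $\varphi^{-1}$, which does not affect the invariant subspace). The action on sections of $L^{\otimes2}$ is multiplicative in the two tensor factors, so $\tilde\varphi$ scales $s_1\otimes s_2$ by $\mu\bar\mu=|\mu|^2=1$, using that all eigenvalues are roots of unity. Transporting through $L^{\otimes2}\cong K$, we conclude that $s_1\otimes s_2$ is a nonzero $\varphi$-invariant holomorphic one-form on $\Sigma$.

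It then remains to show $H^0(\Sigma,K)^\varphi=0$ when $\Sigma/\varphi=\mathbb{P}^1$, which is the one step requiring genuine care, though it is standard. The cleanest route is a local computation at the ramification points of $\pi\colon\Sigma\to\mathbb{P}^1$: a $\varphi$-invariant holomorphic one-form $\omega$ descends, away from the branch locus, to a holomorphic one-form on $\mathbb{P}^1$ minus finitely many points, and near a branch point -- where $\pi$ is modeled on $z\mapsto z^k=u$ and the cyclic stabilizer acts by $z\mapsto\zeta z$ with $\zeta$ a primitive $k$-th root of unity -- invariance forces $\omega=z^{k-1}g(z^k)\,dz=k^{-1}g(u)\,du$ for some holomorphic $g$, so the descended form extends holomorphically across the branch point. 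Hence $\omega$ is pulled back from a global holomorphic one-form on $\mathbb{P}^1$, of which there are none. (Alternatively one can invoke $H^1(\Sigma;\mathbb{C})^\varphi\cong H^1(\Sigma/\varphi;\mathbb{C})=0$ together with the $\varphi$-equivariance of the Hodge decomposition $H^1(\Sigma;\mathbb{C})=H^0(K)\oplus\overline{H^0(K)}$.) Either way this contradicts the nonzero invariant form $s_1\otimes s_2$ constructed above, proving $\mathrm{spec}(\tilde\varphi,\spin)\cap\overline{\mathrm{spec}(\tilde\varphi,\spin)}=\emptyset$; the assertion $\pm1\notin\mathrm{spec}(\tilde\varphi,\spin)$ follows at once, since $\pm1$ are fixed by complex conjugation.
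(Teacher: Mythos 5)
Your argument is correct and is essentially the paper's own proof: multiply eigensections for $\mu$ and $\bar\mu$ under the identification $L^{\otimes 2}\cong K$ to produce a nonzero $\varphi$-invariant holomorphic one-form, then rule such a form out because $\Sigma/\varphi=\mathbb{P}^1$. The only difference is that you spell out the descent across branch points (or the equivariant Hodge-theoretic alternative), a step the paper simply asserts, so nothing further is needed.
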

In light of this lemma, we will denote the eigenvalues of $\tilde{\varphi}^*$ as
\begin{equation}
e^{ \pi i r_1},\dots ,e^{ \pi i r_k}\quad\text{with}\quad0<|r_1|<\dots<|r_k|<1,
\end{equation}
and $e^{ \pi ir_j}$ has multiplicity by $m_j$.
\begin{proof}
Consider the (complex linear) multiplication map
\begin{equation*}
m:H^0(L)\otimes H^0(L)\rightarrow H^0(K)
\end{equation*}
given by the identification $L^2=K$. This is compatible with the actions of $\varphi$ and its lift $\tilde{\varphi}$ in the sense that
\begin{equation*}
m(\tilde{\varphi}^*(s_1),\tilde{\varphi}^*(s_2))=\varphi^*(m(s_1,s_2))
\end{equation*}
Suppose by contradiction that both $\zeta,\bar{\zeta}\in S^1$ belong to $\mathrm{spec}(\tilde{\varphi},\spin)$, and consider non-zero sections with
\begin{equation*}
\tilde{\varphi}^*(s_1)=\zeta\cdot s_1\qquad \tilde{\varphi}^*(s_2)=\bar{\zeta}\cdot s_2.
\end{equation*}
Then we obtain
\begin{equation*}
\varphi^*(m(s_1,s_2))=m(s_1,s_2),
\end{equation*}
so that $m(s_1,s_2)$ is (by unique continuation) a non-zero $\varphi$-invariant holomorphic $1$-form on $\Sigma$. But this cannot exist because such a form would descend to a non-zero holomorphic $1$-form on $\mathbb{P}^1$.
\end{proof}

\begin{example}\label{hyperex} It is well-known that holomorphic $1$-forms on the hyperelliptic curve $y^2=h(x)$ of genus $g$ (where $h$ has distinct roots) are of the form $p(x)dx/y$ for $p(x)$ a polynomial of degree at most $g-1$ \cite{Mir}; the only form invariant under the hyperelliptic involution $(x,y)\mapsto(x,-y)$ is the zero form. Hence there is a lift $\tilde{\varphi}$ that acts on $H^0(L)$ as multiplication by $i$.
\end{example}

\vspace{0.5cm}

\section{The Seiberg-Witten equations on the mapping torus $M_{\varphi}$}\label{perturb}

This section contains the key technical work behind the proof of Theorem \ref{main}. Consider automorphism $\varphi$ of $\Sigma$ with $\Sigma/\varphi=\mathbb{P}^1$, leaving invariant a theta characteristic $L$; denote by $\tilde{\varphi}$ a fixed lift, and denote its order by $d$. Choose a $\varphi$-invariant metric on $\Sigma$, and consider the induced metric on the mapping torus $M_{\varphi}$. Under the natural $d$-fold cover $S^1\times\Sigma\rightarrow M_{\varphi}$, the self-conjugate spin$^c$ structure $\spin_L$ pulls back to the unique torsion one, $\spin_0$. It is well known that the unperturbed equations on $S^1\times\Sigma$ do not admit irreducible solutions (see for example \cite{Lop}), hence neither $M_{\varphi}$ admits them. We will then consider perturbations of the form
\begin{equation*}
\bar{\mathcal{L}}=\mathcal{L}-(\delta/2)\|\Psi\|^2+\varepsilon f(B)\text{ with }\delta,\varepsilon>0,
\end{equation*}
where $f$ is induced by a Morse function on the circle $\mathbb{T}$ of reducible solutions on $M_{\varphi}$. We will show that for suitable choices of $f,\delta$ and $\varepsilon$, no irreducible solutions are introduced and one can achieve transversality in the sense of \cite{KM}: the key issue here is to make sure that the perturbed Dirac operators at the reducible critical points has no kernel. 
\par
The perturbation, and the overall strategy of proof, closely follows the analysis in \cite[Chapter 37]{KM} for the case of flat three-manifolds, with a significant complication: their argument strongly relies of the fact that on a flat three-manifold harmonic spinors and $1$-forms are parallel, hence are determined by their value at a given point. This is not true for the manifolds we consider; on the other hand, we will see that the assumption $\Sigma/\varphi=\mathbb{P}^1$ allows to reach the same conclusions via the study of the pulled-back equations on $\Sigma\times S^1$ and the action of $\varphi$.

\vspace{0.3cm}
\subsection{The equations on $S^1\times\Sigma$.}
In this subsection we follow quite closely parts of \cite{Lop}. Consider the manifold $S^1\times\Sigma$ with the product metric. For later conveniences, we will consider the $S^1$ factor to be $\mathbb{R}/2\pi d\mathbb{Z}$, where $d\in\mathbb{N}$ is the order of the lift $\tilde{\varphi}$, with coordinate $t$.

Consider the trivial spin$^c$ structure $\spin_0$ on $\Sigma\times S^1$, denote by $S$ the spinor bundle, and fix a base spin connection $B_0$ arising from a spin structure obtained by taking the product of a spin structure on $\Sigma$ with the Lie structure on $S^1$. When restricted to each slice $\Sigma\times\{t\}$, the spinor bundle decomposes as
\begin{equation}\label{split}
S=S^+\oplus S^-
\end{equation}
line bundles on $\Sigma$ of degree $\pm (g-1)$ as in Section \ref{prelim}. Furthermore, if one identifies $S^+$ as a square root $L$ of the canonical bundle $K$ of $\Sigma$ and $S^{-}$ as $\bar{L}$, the Dirac operator
\begin{equation*}
D_{B_0}:\Gamma(S)\rightarrow \Gamma(S)
\end{equation*}
is given by
\begin{equation*}
D_{B_0}=
\begin{bmatrix}
i\frac{d}{dt} & \sqrt{2}\bardel_{B_0}^*\\
\sqrt{2} \bardel_{B_0}^*&-i\frac{d}{dt}
\end{bmatrix}
\end{equation*}
where we added the dependence of the holomorphic derivative on the spin connection $B_0$ for clarity.
From the three-dimensional viewpoint, the decomposition (\ref{split}) is the one into $\pm i$-eigenspaces of $\rho(dt)$; indeed, if $dx$ and $dy$ are an oriented orthonormal dual basis at a point for $\Sigma$, the Clifford action is given by the Pauli matrices
\begin{equation}\label{pauli}
dt\mapsto\begin{bmatrix}i&0\\0&-i\end{bmatrix},\quad dx\mapsto\begin{bmatrix}0&-1\\1&0\end{bmatrix},\quad dy\mapsto\begin{bmatrix}0&i\\i&0\end{bmatrix}.
\end{equation}
We can write any other spin$^c$ connection as $B=B_0+b$ for some imaginary valued $1$-form $b$; in turn, it will be convenient to decompose
\begin{equation*}
b=a_t+i fdt,
\end{equation*}
where each $a_t$ is an imaginary valued $1$-form on $\Sigma$ and $f$ is a real valued function on $S^1\times \Sigma$. As $F_{B_0^t}=0$, we have
\begin{equation*}
F_{B^t}=2db=2[d_{\Sigma}a_t+(id_{\Sigma}f-\dot{a_t})\wedge dt]
\end{equation*}
where $d_{\Sigma}$ is the exterior differential on $\Sigma$. Hence
\begin{equation*}
\ast F_{B^t}=2[(\ast_{\Sigma}d_{\Sigma}a_t)dt-i\ast_{\Sigma}(d_{\Sigma}f+i\dot{a_t})].
\end{equation*}
Consider a section $\Psi$ of $S\rightarrow Y$ as a pair $(\alpha,\beta)$ of time-dependent sections of $S^{\pm}\rightarrow \Sigma$. We will consider for $\delta\geq 0$ the perturbed Seiberg-Witten equations

\begin{equation}\label{SW}
\begin{aligned}
D_B\Psi=&\delta\Psi\\
\frac{1}{2}\rho(\ast F_{B^t})+(\Psi\Psi^*)_0=&0,
\end{aligned}
\end{equation}
which are the equations for the critical points for the perturbed Chern-Simons-Dirac functional $\mathcal{L}(B,\Psi)-(\delta/2)\|\Psi\|^2$.
In terms of $(\alpha,\beta)$, recall that we have
\begin{equation*}
(\Psi\Psi^*)_0=\begin{bmatrix}
\frac{1}{2}(|\alpha|^2-|\beta|^2)& \alpha\beta^*\\
\beta\alpha^*&\frac{1}{2}(|\beta|^2-|\alpha|^2)
\end{bmatrix}.
\end{equation*}
In order to study (\ref{SW}), we pull-back everything to a $2\pi d$-periodic configuration on $\mathbb{R}\times \Sigma$. We can use the gauge transformation $u=e^{i\int_0^t f}$ to kill the $dt$ component of $b$. Notice that the new configuration is not necessarily $2\pi d$-periodic anymore, but $|\alpha|^2$ and $|\beta|^2$ still are because $u$ is circle valued. The Dirac operator $D_B$ is then just
\begin{equation*}\displaystyle
D_B=
\begin{bmatrix}
i\frac{d}{dt}& \sqrt{2}\bardel_{B_t}^*\\
 \sqrt{2}\bardel_{B_t} &-i\frac{d}{dt}
\end{bmatrix}
\end{equation*}
where $\bardel_{B_t}=\bardel_{B_0}+a_t^{0,1}$. Hence $(\alpha,\beta)$ satisfy the system
\begin{align}
i\dot{\alpha}+\sqrt{2}\bardel_{B_t}^*\beta=\delta\alpha\label{eq1}\\
-i\dot{\beta}+\sqrt{2}\bardel_{B_t}\alpha=\delta\beta.\label{eq2}
\end{align}
Differentiating the second equation in the variable $t$, we then compute
\begin{align}
\ddot{\beta}&=-i\sqrt{2}\frac{d}{dt}\left(\bardel_{B_t}\alpha\right)+i\delta\dot{\beta} \nonumber\\
&=-i\sqrt{2}\left(\bardel_{B_t}\dot{\alpha}+\dot{a_t}^{0,1}\wedge\alpha\right)+i\delta\dot{\beta}\nonumber \\
&=-i\sqrt{2}\left(i\sqrt{2}\bardel_{B_t}\bardel_{B_t}^*\beta-i\delta\bar{\partial}_{B_t}\alpha+\dot{a_t}^{0,1}\wedge\alpha\right)+i\delta\dot{\beta}\label{compdir1}\\
&=2\bardel_{B_t}\bardel_{B_t}^*\beta+|\alpha|^2\beta-\delta\sqrt{2}\bar{\partial}_{B_t}\alpha+i\delta\dot{\beta}\label{compdir2}\\
&=2\bardel_{B_t}\bardel_{B_t}^*\beta+|\alpha|^2\beta-\delta^2\beta \label{compdir3}.
\end{align}
where the steps are:
\begin{itemize}
\item to obtain $(\ref{compdir1})$, we apply $\bardel_{B_t}$ to (\ref{eq1}).
\item to obtain $(\ref{compdir2})$, we use that
\begin{equation*}
\beta\alpha^*=-i\sqrt{2}\dot{a}_t^{0,1}.
\end{equation*}
To see this, notice first that if $\dot{a}_t=i(fdx+gdy)$, then $\dot{a}_t^{0,1}=\frac{-g+if}{2}d\bar{z}$. Furthermore, the component of $\ast_{\Sigma}\dot{a}_t=i(-gdx+fdy)$ acting from $S^+$ to $S^-$ is given by multiplication by $i(-g+if)\frac{d\bar{z}}{\sqrt{2}}$. The anti-diagonal terms of second Seiberg-Witten equation therefore implies that $\beta\alpha^*+i(-g+if)d\bar{z}/\sqrt{2}=0$, hence our claim.
\item to obtain (\ref{compdir3}), we use (\ref{eq2}).
\end{itemize} 
Taking inner products with $\beta$, and integrating over $\{t\}\times\Sigma$, we get
\begin{equation*}
\langle \ddot{\beta},\beta\rangle_{L^2(\Sigma)}=2\|\bardel_{B_t}^*\beta\|_{L^2(\Sigma)}^2+\|\alpha\otimes\beta\|_{L^2(\Sigma)}^2-\delta^2\|\beta\|^2_{L^2(\Sigma)}
\end{equation*}
where $t$ is implicit in the notation for the $L^2$-norms. As 
\begin{equation*}
\frac{1}{2}\frac{d^2}{dt^2}\|\beta\|^2_{L^2(\Sigma)}=\mathrm{Re}\frac{d}{dt}\langle \dot{\beta},\beta\rangle_{L^2(\Sigma)}=\mathrm{Re}\langle \ddot{\beta},\beta\rangle_{L^2(\Sigma)}+\|\dot{\beta}\|_{L^2(\Sigma)}^2,
\end{equation*}
integrating for $t\in[0,2\pi d]$ and using periodicity of $|\beta|$ we get the identity
\begin{equation*}
\|\dot{\beta}\|^2+2\|\bardel_{B_t}^*\beta\|^2+\|\alpha\otimes\beta\|^2=\delta^2\|\beta\|^2
\end{equation*}
for the $L^2$-norms over $S^1\times \Sigma$. A similar computation implies
\begin{equation*}
\|\dot{\alpha}\|^2+2\|\bardel_{B_t}\alpha\|^2+\|\alpha\otimes\beta\|^2=\delta^2\|\alpha\|^2.
\end{equation*}
Using this, we can prove the following well-know fact.

\begin{lemma}\label{redunper}
The only solutions to the Seiberg-Witten equations for $\delta=0$ are the reducible ones.
\end{lemma}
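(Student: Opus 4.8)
The plan is to substitute $\delta = 0$ into the two integral identities displayed just above the statement. Both right-hand sides then vanish, and since every term on the left is a non-negative $L^2$-norm over $S^1\times\Sigma$, each term must vanish separately: one reads off $\dot\alpha \equiv \dot\beta \equiv 0$, that $\bardel_{B_t}\alpha \equiv 0$ and $\bardel_{B_t}^*\beta \equiv 0$, and that $\|\alpha\otimes\beta\|=0$, so $\alpha\beta^* \equiv 0$ pointwise (by continuity).

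The next step is to promote this to a statement about a configuration on a single slice $\Sigma$. From the identity $\beta\alpha^* = -i\sqrt{2}\,\dot a_t^{0,1}$ used to pass from (\ref{compdir1}) to (\ref{compdir2}), the vanishing of $\beta\alpha^*$ gives $\dot a_t^{0,1} \equiv 0$; since $a_t$ is an imaginary-valued $1$-form on $\Sigma$, its $(0,1)$-part determines its $(1,0)$-part by conjugation, so $\dot a_t \equiv 0$. Hence, after the gauge fixing that killed the $dt$-component of $b$, the whole configuration is $t$-independent: it descends to a triple $(a,\alpha,\beta)$ on $\Sigma$ with $\alpha$ holomorphic for the connection $B = B_0+a$ on $S^+\cong L$, and $\bardel_B^*\beta = 0$.

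To finish, I would first observe the dichotomy $\alpha \equiv 0$ or $\beta \equiv 0$: if $\alpha \not\equiv 0$, then it is a nonzero holomorphic section of a holomorphic line bundle of degree $g-1$, hence vanishes only on a finite set $Z$; the relation $\alpha\beta^* \equiv 0$ forces $\beta \equiv 0$ on $\Sigma\setminus Z$, and therefore $\beta\equiv 0$ everywhere by continuity. In either case, writing $\gamma$ for the surviving spinor component, the off-diagonal part of the second Seiberg--Witten equation is automatic and its diagonal entries reduce to $\ast_\Sigma d_\Sigma a = \pm\tfrac{i}{2}|\gamma|^2$; integrating over $\Sigma$ and applying Stokes kills the left-hand side, so $\gamma \equiv 0$ as well. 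Thus $\Psi \equiv 0$ and the solution is reducible.

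The argument is essentially forced once the two energy identities are in hand; the only slightly substantive points are the holomorphic zero-set dichotomy and the closing integration of the curvature equation. I expect the latter step — ruling out the ``partially reducible'' configurations $(a,0,\beta)$ — to be the one a careful reader will want spelled out, since it is where the definite sign of $|\gamma|^2$ combines with the vanishing of $\int_\Sigma F_{B^t}$ along $dt$.
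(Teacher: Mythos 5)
Your proposal is correct and follows essentially the same route as the paper: set $\delta=0$ in the two energy identities to get time-independence, $\bardel_{B_t}\alpha=0$, $\bardel_{B_t}^*\beta=0$ and $\alpha\otimes\beta\equiv0$, use the holomorphic zero-set dichotomy (the paper's ``unique continuation'') to kill one spinor component, and then integrate the diagonal part of the curvature equation over a slice $\Sigma$ to kill the other. Your extra observations ($\dot a_t\equiv0$ and the explicit Stokes step) are fine but not needed beyond what the paper already records.
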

\begin{proof}
In this case, our identities imply that $\alpha$ and $\beta$ are time-independent, and are holomorphic sections of $S^+$ and $S^-$ respectively; because $|\alpha\otimes \beta|=0$, by unique continuation one of them must be identically zero. To conclude, we use the diagonal component of the second Seiberg-Witten equation:
\begin{equation}\label{curv}
i\ast_{\Sigma}d_{\Sigma}a_t+\frac{1}{2}(|\alpha|^2-|\beta|^2)=0.
\end{equation}
Integrating over $\{t\}\times\Sigma$, we obtain
\begin{equation*}
0=\int_{\Sigma}|\alpha|^2-|\beta|^2,
\end{equation*}
so both $\alpha$ and $\beta$ vanish.
\end{proof}

It is important to notice that while the reducible solutions form a smooth manifold (the $(2g+1)$-dimensional torus of flat connections), this is not a Morse-Bott singularity; this is because the Hessian of $\mathcal{L}$ in the normal direction is degenerate exactly where $D_B$ has kernel, and we have the following.
\begin{lemma}\label{theta}
Identify the torus of flat connections as the product of $S^1\times \mathrm{Jac}(\Sigma)$. Then $D_B$ has kernel exactly at $\{0\}\times\Theta$, where $\Theta$ is the theta divisor of $\Sigma$ (which we identify as the divisor in the space of holomorphic line bundles of degree $g-1$ which admit non-zero holomorphic sections).
\end{lemma}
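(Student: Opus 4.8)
The plan is to diagonalise $D_B$ along the $S^1$ factor, reducing the kernel computation to a spectral problem on $\Sigma$, in the same spirit as the Bochner-type identity (\ref{compdir1})–(\ref{compdir3}). First I would normalise the flat connection: by the K\"unneth splitting $H^1(S^1\times\Sigma)=H^1(S^1)\oplus H^1(\Sigma)$, every flat spin$^c$ connection is gauge equivalent to a product one $B=B_0+is\,dt+a_\Sigma$, where $s$ records the holonomy around the $S^1$ factor (trivial precisely when $s$ lies in the frequency lattice of $S^1$) and $a_\Sigma$ is a harmonic imaginary-valued $1$-form on $\Sigma$, representing a class $\xi\in\mathrm{Jac}(\Sigma)$; tensoring with the fixed theta characteristic, $\xi$ corresponds to a holomorphic line bundle $L_\xi$ of degree $g-1$, and by the dictionary of Section \ref{prelim} the restriction to a slice $\Sigma\times\{t\}$ is $S^+=L_\xi$, $S^-=\overline{L_\xi}$ with $\bardel_{B_t}=\bardel_\xi$ independent of $t$. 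Since $a_\Sigma$ is $t$-independent, $D_B$ commutes with translation in $t$, hence preserves the Fourier modes of sections of $S$.

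Next I would run the argument mode by mode. With the conventions (\ref{pauli}) the term $is\,dt$ contributes a constant $\mp s$ to the two diagonal entries of $D_B$, so on the Fourier mode of effective frequency $c\in\mathbb{R}$ (the $c$'s run over an arithmetic progression offset by the holonomy $s$, so $c=0$ is attained exactly when the $S^1$-holonomy is trivial) the equation $D_B\Psi=0$ reads
\begin{equation*}
c\,\alpha'=\sqrt2\,\bardel_\xi^*\beta',\qquad c\,\beta'=-\sqrt2\,\bardel_\xi\alpha',
\end{equation*}
for sections $\alpha'\in\Gamma(S^+)$, $\beta'\in\Gamma(S^-)$ on $\Sigma$. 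Applying $\bardel_\xi^*$ to the second equation and inserting the first gives $2\bardel_\xi^*\bardel_\xi\alpha'+c^2\alpha'=0$; pairing with $\alpha'$ in $L^2(\Sigma)$ yields $2\|\bardel_\xi\alpha'\|^2+c^2\|\alpha'\|^2=0$, which is precisely the single-mode, $\delta=0$ analogue of (\ref{compdir1})–(\ref{compdir3}), simpler here because $B$ is flat so the $\dot a_t$ term is absent. Hence $\alpha'=0$, and then $\beta'=0$, on every mode with $c\neq0$. Since a mode with $c=0$ occurs exactly when the $S^1$-holonomy is trivial, this already shows $\ker D_B=0$ unless $B$ lies, up to gauge, in the slice $\{0\}\times\mathrm{Jac}(\Sigma)$.

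On that slice the only surviving mode is the one with $c=0$, where the equations collapse to $\bardel_\xi\alpha'=0$ and $\bardel_\xi^*\beta'=0$. Thus $\ker D_B\cong H^0(L_\xi)\oplus\{\beta\in\Gamma(S^-):\bardel_\xi^*\beta=0\}$, and by the Hodge-theoretic identifications recalled in Section \ref{prelim} the second summand is $H^1(L_\xi)$. Since $\deg L_\xi=g-1$, Riemann--Roch gives $h^1(L_\xi)=h^0(L_\xi)$, so $\ker D_B$ is nonzero precisely when $h^0(L_\xi)>0$, i.e.\ precisely when $L_\xi$ lies on the theta divisor $\Theta$. This exhibits the locus $\{B:\ker D_B\neq0\}$ as $\{0\}\times\Theta$.

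I expect the main difficulty to be bookkeeping rather than analysis: one must check that product connections exhaust the flat ones up to gauge and that a zero-frequency mode appears only for trivial $S^1$-holonomy (the offset $c = s + (\text{lattice})$ is exactly the mechanism ruling out kernel elsewhere), and one must carefully align the two natural identifications of $S^+$ and $S^-$ with $L_\xi$ and $\overline{L_\xi}$, together with Riemann--Roch, so that the surviving kernel is $H^0\oplus H^1$ of one and the same line bundle and is cut out by a single theta divisor. The analytic heart is just the Weitzenb\"ock inequality already established above, applied one Fourier mode at a time with $\delta=0$.
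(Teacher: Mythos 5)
Your proposal is correct and follows essentially the same route as the paper: reduce to a product flat connection, Fourier-decompose in $t$, derive the mode-by-mode Weitzenb\"ock identity forcing either vanishing of the mode or trivial $S^1$-holonomy, and identify the surviving zero-mode kernel with $H^0(L_\xi)\oplus H^1(L_\xi)$, using Riemann--Roch in degree $g-1$ to see both vanish or not simultaneously. The only difference is presentational (you phrase the $S^1$ direction via holonomy and the K\"unneth/harmonic normalization, while the paper writes $B=A+i\lambda\,dt$ and notes $\lambda\in\tfrac{1}{d}\mathbb{Z}$ is gauged away by $e^{i\frac{n}{d}t}$), which does not change the argument.
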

As a consequence, the structure of the singular set is highly dependent on the conformal structure of $\Sigma$.
\begin{proof}
Consider a spin$^c$ connection with $B^t$ flat of the form $B=A+i\lambda dt$ where $B$ is pulled back from a connection on $\Sigma$ and $\lambda\in\mathbb{R}$. The Dirac operator is then given by
\begin{equation*}
D_B=
\begin{bmatrix}
i\frac{d}{dt}-\lambda& \sqrt{2}\bardel_{A}^*.\\
 \sqrt{2}\bardel_{A} &-i\frac{d}{dt}+\lambda
\end{bmatrix}
\end{equation*}
Consider an element $(\alpha,\beta)$ in the kernel of $D_B$. We can expand the spinors in Fourier modes with respect to $t\in\mathbb{R}/2\pi d\mathbb{Z}$:
\begin{equation*}
\alpha=\sum_{n\in\mathbb{Z}} e^{i\frac{n}{d}t}\alpha_n,\qquad\beta=\sum_{n\in\mathbb{Z}} e^{i\frac{n}{d}t}\beta_n,
\end{equation*}
with $\alpha_n,\beta_n$ sections of $S^{\pm}\rightarrow\Sigma$. We get for each $n\in\mathbb{Z}$ the equations
\begin{align*}
-\left(\frac{n}{d}+\lambda\right)\alpha_n+\sqrt{2}\bardel_{A}^*\beta_n&=0\\
 \sqrt{2}\bardel_{A}\alpha_n+\left(\frac{n}{d}+\lambda\right)\beta_n&=0.
\end{align*}
Applying $\sqrt{2}\bardel_{A}$ of the second equation, and substituting the first one in, we see that that
\begin{equation*}
2\bardel_{A}^*\bardel_{A}\alpha_n=-\left(\frac{n}{d}+\lambda\right)^2\alpha_n
\end{equation*}
so that integrating by parts over $\Sigma$, for every $n$
\begin{equation*}
2\|\bardel_{A}\alpha_n\|^2=-\left(\frac{n}{d}+\lambda\right)^2\|\alpha_n\|^2.
\end{equation*}
Similarly, we have the identity
\begin{equation*}
2\|\bardel_{A}^*\beta_n\|^2=-\left(\frac{n}{d}+\lambda\right)^2\|\beta_n\|^2.
\end{equation*}
Suppose that $\alpha_n\neq0$ for some $n$. This implies $\lambda =-n/d$ for some $n\in\mathbb{Z}$, so that $B$ is gauge equivalent to $A$ via the gauge transformation $e^{i\frac{n}{d}t}$ (which is well defined because $t\in\mathbb{R}/2\pi d\mathbb{Z}$). Furthermore, $\bardel_{A}\alpha_n=0$, so that (the line bundle corresponding to the homorphic structure defined by) $A$ belongs to the theta divisor of $\Sigma$. Finally, notice that $\bardel_{A}$ has kernel if and only if $\bardel_{A}^*$ does because they are adjoint to each other and have index zero by Riemann-Roch.
\end{proof}

\vspace{0.3cm}
\subsection{Perturbing the equations on $M_{\varphi}$, part 1.}
Let us begin by discussing the behavior of the perturbed equations on $S^1\times\Sigma$ for $\delta>0$ small. Suppose $(B_i,s_i,\varphi_i)$ is a sequence of irreducible solutions in the blow-up for $\delta_i\rightarrow 0$. Here $\|\varphi_i\|=1$ and we will denote $\varphi_i=(\alpha_i,\beta_i)$. By compactness, such a sequence contains a subsequence converging up to gauge in the blow-up to a solution of the unperturbed equations; this is necessarily reducible by Lemma \ref{redunper}, so that $s_i\rightarrow 0$ and we can write the limit as $(B,0,\varphi)$ with $B^t$ flat, $D_B\varphi=0$ (because of the first Seiberg-Witten equation), and $\|\varphi\|=1$. In particular, $D_B$ has kernel, so the previous lemma implies that $B$ is (up to gauge) the pullback of a connection $A$ on $\Sigma$. Furthermore, the curvature equation (\ref{curv}) is unchanged under our perturbation, and reads in the blow-up as
\begin{equation}\label{curv}
i\ast_{\Sigma}d_{\Sigma}(a_i)_t+\frac{1}{2}s_i^2(|\alpha_i|^2-|\beta_i|^2)=0.
\end{equation}
Because we are assuming $s_i> 0$, integrating we have
\begin{equation*}
\int_{S^1\times\Sigma}|\alpha_i|^2=\int_{S^1\times\Sigma}|\beta_i|^2.
\end{equation*}
In the limit, writing $\varphi=(\alpha,\beta)$, we obtain
\begin{equation*}
\int_{S^1\times\Sigma}|\alpha|^2=\int_{S^1\times\Sigma}|\beta|^2.
\end{equation*}
so that $(\alpha,\beta)$ are \textit{both} non-vanishing (indeed both have $L^2$-norm $1/\sqrt{2}$).
\begin{remark}
Notice that such pairs $(\alpha,\beta)$ do in general exist, so we cannot use this argument to rule out the existence of irreducible solutions on $S^1\times \Sigma$ for $\delta$ small.
\end{remark}
We will now focus on our case of interest, the perturbed equations with $\delta>0$ on the mapping torus $M_{\varphi}$ of $\varphi:\Sigma\rightarrow \Sigma$, where $\Sigma/\varphi=\mathbb{P}^1$. Throughout the section, it will be instructive to have in mind the simple case of the torus, Example \ref{torus}. If the lift $\tilde{\varphi}$ has order $d$, there is a $d$ to $1$ covering map $S^1\times \Sigma\rightarrow M_{\varphi}$, and we denote by $G$ the group of deck transformations (with natural generator $g$); there is a natural induced action of $G$ on the spinor bundle. We will denote by $\tau\in\mathbb{R}/2\pi\mathbb{Z}$ the coordinate in the mapping torus $M_{\varphi}$; the covering map identifies $t$ and $\tau$. Because of our choice of metric, $d\tau$ is the harmonic representative of a generator of $H^1(M_{\varphi};\mathbb{R})\cong \mathbb{R}$.
\par
The pull back of a spin$^c$ structure $\spin_L$ is $\spin_0$, and a configuration on $M_{\varphi}$ pulls-back to a $G$-invariant one on $S^1\times \Sigma$. Gauge transformations $M_{\varphi}\rightarrow S^1$ correspond to $G$-invariant gauge transformations in the cover. We are now ready to prove the following.
\begin{lemma}
Suppose the $\Sigma/\varphi=\mathbb{P}^1$. Then, for sufficiently small $\delta>0$, the perturbed Seiberg-Witten equations do not admit irreducible solutions.
\end{lemma}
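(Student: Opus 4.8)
The plan is to argue by contradiction: assuming irreducible solutions persist as $\delta\to 0$, I would use compactness to extract a reducible limit, identify it via Lemma \ref{theta} with (the pull-back of) a point of the theta divisor of $\Sigma$ carrying two nonzero harmonic spinors, and then read off from the $G$-symmetry a conjugate pair of eigenvalues of $\tilde\varphi^*$, contradicting Lemma \ref{noconj}.

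In detail: suppose that for some sequence $\delta_i\to 0$ the $\delta_i$-perturbed equations on $M_\varphi$ admit irreducible solutions $\mathfrak{c}_i$. Applying the compactness theorem on the closed manifold $M_\varphi$ and passing to a subsequence, the $\mathfrak{c}_i$ converge in the blow-up, after gauge transformations on $M_\varphi$, to a solution $(B,0,\varphi)$ of the unperturbed equations; since these have no irreducible solutions on $M_\varphi$, the limit is reducible with $B^t$ flat, $\|\varphi\|=1$ and $D_B\varphi=0$. Pulling everything back along the $d$-fold cover $S^1\times\Sigma\to M_\varphi$ gives a $G$-invariant configuration with the same properties, and since $D_B$ has a kernel, Lemma \ref{theta} applies: after a further gauge transformation on $S^1\times\Sigma$, which by the proof of that lemma may be taken of the form $e^{i(n/d)t}$, the connection is pulled back from a point $A$ of the theta divisor $\Theta$ and $\varphi=(\alpha,\beta)$ is $t$-independent with $\alpha\in\ker\bardel_A$ and $\beta\in\ker\bardel_A^*$. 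Because the $\mathfrak{c}_i$ pull back to irreducible solutions on $S^1\times\Sigma$, the identity $\int|\alpha_i|^2=\int|\beta_i|^2$ established above survives in the limit, so $\alpha$ and $\beta$ are \emph{both} not identically zero.

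The symmetry is then exploited as follows. The gauge transformation $e^{i(n/d)t}$ is not $G$-invariant, but $g^*e^{i(n/d)t}=c\,e^{i(n/d)t}$ with $c=e^{2\pi i n/d}$ a constant, hence central; so after applying it the $G$-invariance of the configuration becomes $g^*\alpha=c\,\alpha$ and $g^*\beta=c\,\beta$. Since $\alpha$ is $t$-independent, $g$ acts on it as a fixed power of the action of $\tilde\varphi^*$ on $H^0(L)=\mathcal{H}^+$, so $\alpha\neq 0$ puts $c$ (or its conjugate — immaterial below) in $\mathrm{spec}(\tilde\varphi,\spin)$. Likewise $\beta\neq 0$ gives a $c$-eigenvector for the $G$-action on $\mathcal{H}^-$, which by Remark \ref{conj} is conjugate to the action on $\mathcal{H}^+$, so $\bar c$ lies in $\mathrm{spec}(\tilde\varphi,\spin)$ as well. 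Thus $\mathrm{spec}(\tilde\varphi,\spin)$ contains a conjugate pair, contradicting Lemma \ref{noconj}. (Equivalently, in the style of the proof of that lemma: using that $A$ is flat, $\alpha\bar\beta$ is a $t$-independent holomorphic section of $K$, i.e.\ a nonzero holomorphic $1$-form, and $g^*(\alpha\bar\beta)=c\bar c\,\alpha\bar\beta=\alpha\bar\beta$, so it descends to a nonzero holomorphic $1$-form on $\Sigma/\varphi=\mathbb{P}^1$, which is absurd.) Therefore no such sequence exists and the claim follows for all sufficiently small $\delta>0$.

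I expect the main obstacle to be bookkeeping the $G$-action through the two gauge transformations in the compactness step — in particular, making sure the gauge transformation produced by Lemma \ref{theta} interacts with $G$ only through the constant $c$, so that the limiting spinor is $G$-equivariant up to a scalar — since this is exactly what lets the hypothesis $\Sigma/\varphi=\mathbb{P}^1$ enter via Lemma \ref{noconj}. The remaining ingredients (compactness, non-existence of unperturbed irreducibles, the $\int|\alpha|^2=\int|\beta|^2$ identity) are already in place above or standard.
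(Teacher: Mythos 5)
Your proposal is correct and follows essentially the same route as the paper: compactness with $G$-equivariance forces a reducible limit on the circle of flat connections with both spinor components nonzero, the Fourier/gauge analysis from Lemma \ref{theta} puts the kernel elements in the form $e^{i(n/d)t}$ times elements of $\ker\bardel_{A_0}$ and $\ker\bardel_{A_0}^*$, and $G$-invariance then forces a conjugate pair of eigenvalues of $\tilde{\varphi}^*$, contradicting Lemma \ref{noconj}. Your alternative ending via the invariant holomorphic $1$-form $\alpha\bar{\beta}$ is just the proof of Lemma \ref{noconj} inlined, so it is not a genuinely different argument.
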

\begin{proof}
Suppose the contrary is true. Pulling back to $S^1\times\Sigma$, we obtain from our discussion of compactness above (which holds $G$-equivariantly) that for a connection of the form $B=A_0+i\lambda d\tau$, where $A_0$ is the spin connection of $L$, the operator $D_B$ has kernel containing $G$-invariant configurations $(\alpha,\beta)$ with both spinors non-zero. The only difference is that here we will only consider gauge transformations pulled-back from $M_{\varphi}$, i.e. which are $2\pi$-periodic (rather than just $2\pi d$-periodic); in particular $B$ is not necessarily equivalent to $A_0$.
\par
As in the proof of Lemma \ref{theta}, such a pair $(\alpha,\beta)$ is of the form
\begin{equation*}
(e^{i\frac{n}{d}t}\alpha_*,e^{i\frac{n}{d}t}\beta_*)
\end{equation*}
for $\alpha_*,\beta_*$ sections of $S^{\pm}\rightarrow \Sigma$ in the kernel of the operators $\bardel_{A_0}$ and $\bardel_{A_0}^*$, and some $n\in\mathbb{Z}$. Notice that the gauge equivalence class as configurations in $M_{\varphi}$ is determined by $n$ modulo $d$. If such a configuration is $\tilde{\varphi}$-invariant, we then have that
\begin{equation*}
\tilde{\varphi}^*\alpha_*=e^{2\pi in/d}\alpha_*,\qquad \tilde{\varphi}^*\beta_*=e^{2\pi in/d}\beta_*.
\end{equation*}
As both vectors are non-zero, this implies that $e^{2\pi in/d}$ is an eigenvalue for the action of $\tilde{\varphi}^*$ on both $\ker\bardel_{A_0}=H^0(L)$ and $\ker\bardel_{A_0}^*=H^1(L)$. But the latter is isomorphic to $H^0(L)$ with the conjugate action (Remark \ref{conj}), so that $e^{-2\pi in/d}$ is an eigenvalue of $\tilde{\varphi}^*$ on $H^0(L)$, which contradicts Lemma \ref{noconj}.
\end{proof}

From this, we can also readily determine those $B\in\mathbb{T}$ whose associated Dirac operators $D_B$ on $M_{\varphi}$ has kernel: the $e^{2\pi i n/d}$-eigenspace on $\mathcal{H}^{\pm}$ corresponds to the kernel of the operator associated to $B=A_0-i\frac{n}{d}d\tau$. Lemma \ref{noconj} implies that such kernel only consists either of positive spinors or of negative spinors.
\\
\par
Fix $\delta>0$ sufficiently small. For our purposes, it will be important to also determine the connections $B=A_0+i\lambda d\tau\in\mathbb{T}$ for which the perturbed Dirac equation
\begin{equation}\label{pertdir}
D_B\Phi=\delta\Phi
\end{equation}
on $M_{\varphi}$ has solutions. To do this, after pulling back to $S^1\times \Sigma$, we can proceed as in the proof of Lemma \ref{theta} to obtain
\begin{equation*}
2\bardel_{A_0}^*\bardel_{A_0}\alpha_n=[\delta^2-\left(\frac{n}{d}+\lambda\right)^2]\alpha_n
\end{equation*}
for all $n\in\mathbb{Z}$. Now, $\bardel_{A_0}^*\bardel_{A_0}$ is a non-negative second order elliptic self-adjoint operator, hence it has a spectral gap. Therefore, if $\delta>0$ is small enough, our identity forces
\begin{equation*}
\bardel_{A_0}\alpha_n=0\quad \text{and}\quad [\delta^2-\left(\frac{n}{d}+\lambda\right)^2]\alpha_n=0.
\end{equation*} 
The same holds for the $\beta_n$, and as in Lemma \ref{theta} we conclude
\begin{equation*}
\left(\frac{n}{d}+\lambda+\delta\right)\alpha_n=0\qquad \left(\frac{n}{d}+\lambda-\delta\right)\beta_n=0,
\end{equation*}
so that if for example $\alpha_n\neq0$,  then $\lambda=-\frac{n}{d}-\delta$ and $\beta_n=0$. We have shown the following.
\begin{lemma}\label{pertdir}
Choose $\delta>0$ small. The connections $B\in\mathbb{T}$ such that $D_B\Phi=\delta\Phi$ has non-trivial solutions on $M_\varphi$ are of the form
\begin{equation*}
B=A_0-i\left(\frac{n}{d}\pm\delta\right) d\tau\quad\text{for some }n\in\mathbb{Z},
\end{equation*}
the kernel corresponding to the $e^{2\pi i \frac{n}{d}}$-eigenspace of the action of $\tilde{\varphi}^*$ on the spaces of harmonic spinors $\mathcal{H}^\pm$ respectively.
\end{lemma}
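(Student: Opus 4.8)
The plan is to reduce the perturbed Dirac equation on $M_\varphi$ to the $d$-fold cover $S^1\times\Sigma$ and rerun the Fourier-mode analysis of Lemma~\ref{theta}, now with the eigenvalue $0$ replaced by $\pm\delta$, and then cut down to the $G$-invariant solutions.

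Since $B\in\mathbb{T}$ we may take $B=A_0+i\lambda\,d\tau$ with $A_0$ the pullback of the spin connection of $L$ and $\lambda\in\mathbb{R}$, and a solution $\Phi=(\alpha,\beta)$ of $D_B\Phi=\delta\Phi$ on $M_\varphi$ pulls back to a $G$-invariant solution of the same equation on $S^1\times\Sigma$, where $D_B$ has the block form of Lemma~\ref{theta}. Expanding in Fourier modes $\alpha=\sum_n e^{int/d}\alpha_n$, $\beta=\sum_n e^{int/d}\beta_n$ over $t\in\mathbb{R}/2\pi d\mathbb{Z}$, the equation becomes, for each $n\in\mathbb{Z}$,
\begin{align*}
-\Bigl(\tfrac{n}{d}+\lambda\Bigr)\alpha_n+\sqrt{2}\,\bardel_{A_0}^*\beta_n&=\delta\alpha_n,\\
\sqrt{2}\,\bardel_{A_0}\alpha_n+\Bigl(\tfrac{n}{d}+\lambda\Bigr)\beta_n&=\delta\beta_n.
\end{align*}
Applying $\sqrt{2}\,\bardel_{A_0}^*$ to the second equation and substituting the first yields $2\bardel_{A_0}^*\bardel_{A_0}\alpha_n=\bigl[\delta^2-(\tfrac{n}{d}+\lambda)^2\bigr]\alpha_n$, and symmetrically $2\bardel_{A_0}\bardel_{A_0}^*\beta_n=\bigl[\delta^2-(\tfrac{n}{d}+\lambda)^2\bigr]\beta_n$.

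The one genuinely quantitative input is the spectral gap: $\bardel_{A_0}^*\bardel_{A_0}$ is a fixed non-negative self-adjoint elliptic operator on $\Sigma$, hence has discrete spectrum with $0$ isolated, so choosing $\delta>0$ smaller than the square root of its least positive eigenvalue (and also small enough for the earlier lemmas) forces, whenever $\alpha_n\neq0$, both $\bardel_{A_0}\alpha_n=0$ and $(\tfrac{n}{d}+\lambda)^2=\delta^2$, and likewise for $\beta_n$; in particular $\alpha_n,\beta_n$ are harmonic spinors. Feeding this back into the first-order pair (using also $\bardel_{A_0}^*\beta_n=0$ when $\beta_n\neq0$) shows that $\alpha_n$ and $\beta_n$ cannot both be nonzero for the same $n$ and refines the sign, giving $\lambda=-\tfrac{n}{d}-\delta$ when $\alpha_n\neq0$ and $\lambda=-\tfrac{n}{d}+\delta$ when $\beta_n\neq0$, i.e.\ $B=A_0-i(\tfrac{n}{d}\pm\delta)\,d\tau$.

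Finally I would impose $G$-invariance. A nonzero mode $(e^{int/d}\alpha_*,0)$ with $\alpha_*\in\ker\bardel_{A_0}=\mathcal{H}^+$ is fixed by the generator of $G$ exactly when $\tilde\varphi^*\alpha_*=e^{2\pi in/d}\alpha_*$, by the same bookkeeping as in the proof that $M_\varphi$ carries no irreducible solutions; hence for $B=A_0-i(\tfrac{n}{d}+\delta)d\tau$ the solution space of $D_B\Phi=\delta\Phi$ is precisely the $e^{2\pi in/d}$-eigenspace of $\tilde\varphi^*$ on $\mathcal{H}^+$, and symmetrically the negative-spinor solutions for $B=A_0-i(\tfrac{n}{d}-\delta)d\tau$ correspond to the $e^{2\pi in/d}$-eigenspace on $\mathcal{H}^-$. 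The only obstacle worth flagging is making the choice of $\delta$ genuinely uniform; once the metric and $L$ are fixed this is immediate from discreteness of the spectrum, and the rest is the mechanical repetition of Lemma~\ref{theta} together with the extra equivariance constraint.
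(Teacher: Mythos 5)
Your proposal is correct and follows essentially the same route as the paper: pull back to $S^1\times\Sigma$, rerun the Fourier-mode computation of Lemma~\ref{theta} with eigenvalue $\delta$, invoke the spectral gap of $\bardel_{A_0}^*\bardel_{A_0}$ to force harmonicity and $(\tfrac{n}{d}+\lambda)^2=\delta^2$, refine the sign via the first-order system, and identify the $G$-invariant solutions with the $e^{2\pi i n/d}$-eigenspaces of $\tilde{\varphi}^*$ exactly as in the preceding irreducibility argument. No gaps worth noting.
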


So, roughly speaking, positive harmonic spinors are shifted clockwise while negative harmonic spinors are shifter counterclockwise, see Figure \ref{specflow}.

\begin{figure}[h]
  \centering
\def\svgwidth{\textwidth}
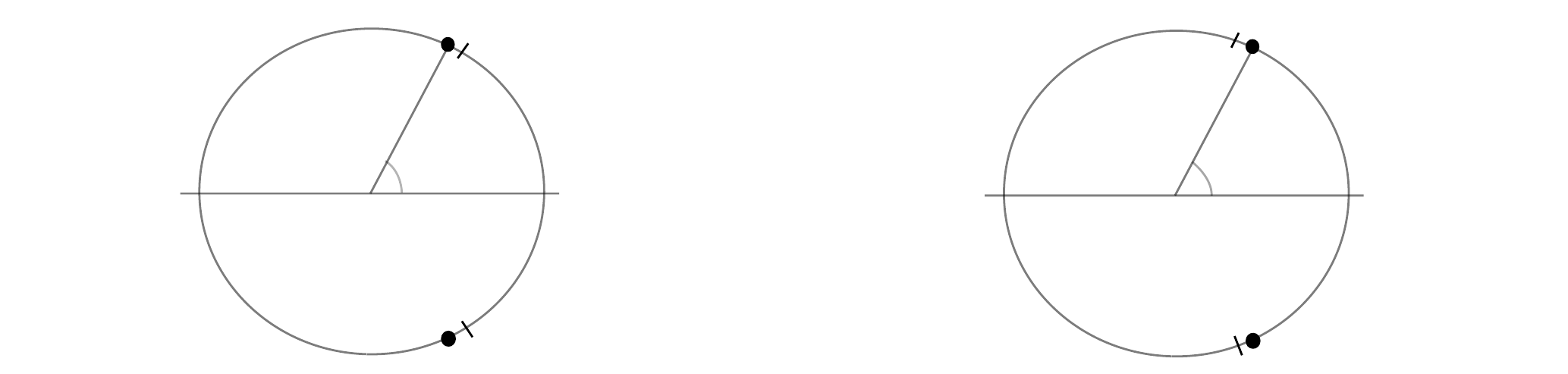
\caption{Consider $0<r<1$. In both pictures, the depicted angle is $\pi r$. The left picture represents the case in which $e^{-\pi ir}$ is an eigenvalue of $\tilde{\varphi}^*$ on $\mathcal{H}^+$, so that the upper (resp. lower) dot corresponds to the $e^{\mp \pi ir}$ eigenspace of the action of $\tilde{\varphi}^*$ on $\mathcal{H}^{\pm}$ and the dashes represent the connections for which $D_B$ has kernel. In the picture on the right, $e^{\pi ir}$ is an eigenvalue of $\tilde{\varphi}^*$ on $\mathcal{H}^+$ instead. Notice that the two cases are mutually exclusive by Lemma \ref{noconj}.}
\label{specflow}
\end{figure}

\subsection{Perturbing the equations on $M_{\varphi}$, part 2.}
The circle of reducible solutions $\mathbb{T}$ is still present after the perturbations we have added. If for none of the $B\in\mathbb{T}$ the Dirac operator $D_B$ on $M_{\varphi}$ has kernel (i.e. by Lemma \ref{pertdir} we have $H^0(L)=0$), we can conclude in the same way as in the case of $S^1\times S^2$ \cite[Chapter $36$]{KM}. Otherwise, in order to proceed choose a Morse function
\begin{equation*}
f:\mathbb{T}\rightarrow \mathbb{R}
\end{equation*}
which has minima exactly at the connections $B\in\mathbb{T}$ for which $D_B$ has kernel (i.e. the union of the spectrum of $\tilde{\varphi}^*$ and its conjugate again by Lemma \ref{pertdir}). We will assume that $f$ is conjugation invariant, so that $\pm1$ are maxima of $f$. For a fixed small $\delta>0$ as above, we consider for $\varepsilon>0$ the perturbed Chern-Simons-Dirac functional
\begin{equation}\label{pertCSD}
\bar{\mathcal{L}}=\mathcal{L}-(\delta/2)\|\Psi\|^2+\varepsilon f(B)
\end{equation}
where, by abuse of notation, $f$ is the function on the configuration space induced by $f$ via first projecting $B$ to the harmonic part of $B-A_0$ and then taking the quotient onto
\begin{equation*}
\mathbb{T}=H^1(M_{\varphi};i\mathbb{R})/H^1(M_{\varphi};2\pi i\mathbb{Z}).
\end{equation*}
Our claim is that for $\varepsilon>0$ small enough, the equations still do not have irreducible solutions. Suppose by contradiction that we have such a sequence $(B_j,s_j,\psi_j)$ with $s_j>0$ for $\varepsilon_j\rightarrow 0$. By our choice of $\delta>0$, we can find a subsequence converging up to gauge to $(B,0,\psi)$, a solution in the blowup to the equations with $\varepsilon=0$. Hence $B\in\mathbb{T}$ and $D_B\psi=\delta\psi$ (looking at the first Seiberg-Witten equation), so that by Lemma \ref{pertdir} $B=B_*+i\lambda d\tau$ for some connection $B_*\in\mathbb{T}$ for which $D_{B_*}$ has kernel  and $\lambda=\mp \delta$ (the sign depending on whether the spinor is positive or negative, respectively). The curvature part of the critical point equations of (\ref{pertCSD}) takes the form
\begin{equation*}
\frac{1}{2}\ast F_{B^t_j}+s^2_j\rho^{-1}(\psi_j\psi_j^*)_0=-i\varepsilon_j \eta(B_j)d\tau.
\end{equation*}
Here $\eta$ is (the pullback to the configuration space of) a $2\pi$-periodic real-valued function on the space of harmonic forms of $M_{\varphi}$. The assumption that $B_*$ is a minimum of $f$ implies that
\begin{equation*}
x\cdot(\eta(B_*+ix d\tau))\geq 0
\end{equation*}
for small values of $x\in\mathbb{R}$ (which we think of as the coordinate of $H^1(M_{\varphi};i\mathbb{R})$ near $B_*$). Using exactness of $F_{B^t_j}$ and $s_j>0$, integrating over $M_{\varphi}$ we see that
\begin{equation*}
\int_{M_{\varphi}}\langle i \lambda_jd\tau,\rho^{-1}(\psi_j\psi_j^*)_0\rangle\leq0
\end{equation*}
where $i\lambda_j dt$ is the harmonic part of $B_j-B_*$, so in the limit
\begin{equation*}
\int_{M_{\varphi}}\langle i \lambda d\tau,\rho^{-1}(\psi\psi^*)_0\rangle\leq0.
\end{equation*}
On the other hand, by Lemma \ref{pertdir} and its proof, $\psi$ is a non-vanishing $D_{B_*}$-harmonic spinor, and satisfies $\rho(i\lambda d\tau) \psi=\delta\psi$; this last equation implies that
\begin{equation*}
\langle i\lambda d\tau,\rho^{-1}(\psi\psi^*)_0\rangle=\delta|\psi|^2/2\geq0,
\end{equation*}
with strict inequality almost everywhere by unique continuation, hence a contradiction.
\\
\par
Hence, for suitable small values of $\varepsilon,\delta>0$, the perturbed functional (\ref{pertCSD}) has no irreducible critical points, and we have a concrete description of the reducible critical points. In particular, the corresponding perturbed Dirac operators have no kernel, so that the critical points in the blow-down are regular. Finally, we can introduce an additional small perturbation to achieve regularity in the blow-up in the sense of \cite{KM} (without introducing irreducible critical points), making in particular the spectrum of the perturbed Dirac operators at the reducible critical points simple; from this, in the next section we will find a concrete description of the Floer chain complex in terms of the action of $\tilde{\varphi}^*$ on $H^0(L)$.

\vspace{0.5cm}
\section{The Floer chain complex}\label{chain}
From the description of the moduli spaces in the previous section, we can directly obtain a concrete expression for the Floer chain complexes as in the following.
Denote as usual by
\begin{equation*}
\T_l=\mathbb{Z}[U,U^{-1}]/U\cdot\mathbb{Z}[U]
\end{equation*}
the standard tower with generators $U^{-n}$ for $n\geq0$, with gradings shifted so that the bottom element has degree $d$.
\begin{thm}Assume $\Sigma/\varphi=\mathbb{P}^1$. If $H^0(L)=0$, then there are choices of metrics and perturbations for which the Floer chain complex of $\HMt_{\bullet}(M_{\varphi},\spin_L)$ is given (up to an overall grading shift) by
\begin{equation*}
\T_{-1}\oplus\T
\end{equation*}
with trivial differential. If $H^0(L)\neq 0$, denote the eigenvalues of $\tilde{\varphi}^*$ by
\begin{equation*}
e^{ \pi i r_1},\dots, e^{ \pi i r_k}
\end{equation*}
with $0<|r_1|<\dots<|r_k|<1$ and multiplicity $m_i$. Then, up to an overall grading shift, the Floer chain complex of $\HMt_{\bullet}(M_{\varphi},\spin_L)$ has the form
\begin{center}
\begin{tikzcd}[column sep=tiny]
  \T\arrow[dr,"\partial_k^r"]& &\T^2\arrow[dr,"\partial_{k-1}^r"]\arrow[dl,"\partial_k^l"]&&\dots&\T^2\arrow[dr,"\partial_i^r"]&&\T^2\arrow[dl,"\partial_i^l"]&&\dots&&\T^2\arrow[dr,"\partial_1^r"]&&\T\arrow[dl,"\partial_1^l"]\\
    &\T^2 & &\T^2&\dots&&\T^2&&&\dots&&&\T^2
\end{tikzcd}
\end{center}
where the towers in the top/bottom row correspond to the maxima/minima of the Morse function on $\mathbb{T}$. Furthermore:
\begin{itemize}
\item  for $1<i<k$, if $r_i>0$, then $\partial^r_i$ is multiplication by $U^{m_i}$ and $\partial^l_i$ is the identity; if $r_i<0$, then $\partial^r_i$ is the identity and $\partial^l_i$ is multiplication by $U^{m_i}$.  
\item If $r_1>0$, then $\partial^r_1$ is multiplication by $U^{m_1}$ and $\partial^l_1$ is the diagonal inclusion; if $r_1<0$, then $\partial^r_1$ is the identity and $\partial^l_1$ is multiplication by $U^{m_1}$ followed by the diagonal inclusion.  
\item If $ r_k>0$, then $\partial^r_k$ is multiplication by $U^{m_k}$ followed by diagonal inclusion and $\partial^l_k$ is identity; if $r_k<0$, then $\partial^r_k$ is the diagonal inclusion and $\partial^l_k$ is multiplication by $U^{m_k}$.  
\end{itemize}
Finally, the gradings of the towers $\T$ are shifted so that all differentials have degree $-1$.
\end{thm}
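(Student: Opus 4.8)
The plan is to read the chain complex off directly from the description of the critical points and their trajectory spaces produced in Section~\ref{perturb}, following the scheme of \cite[Chapter~37]{KM} for flat three-manifolds but substituting the Fourier-mode analysis on the double cover $S^1\times\Sigma$ for the use of parallel harmonic spinors there. The case $H^0(L)=0$ is the first part of the statement, which one obtains exactly as for $S^1\times S^2$ in \cite[Chapter~36]{KM}, so I focus on $H^0(L)\neq 0$.

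First I would record the combinatorics of the critical points. By Section~\ref{perturb}, for suitable $\delta,\varepsilon>0$ (and one further small perturbation) the perturbed functional $\bar{\mathcal{L}}$ has no irreducible critical points, and its reducible critical points in the blow-down are exactly the critical points of the conjugation-invariant Morse function $f$ on $\mathbb{T}\cong S^1$. By construction the minima of $f$ are the $2k$ connections at which $D_B$ has kernel: by Lemma~\ref{pertdir} these form $k$ conjugate pairs, the pair indexed by $|r_j|$ carrying an $m_j$-dimensional kernel (from the $e^{\pm\pi i r_j}$-eigenspaces of $\tilde\varphi^*$ on $\mathcal{H}^{\pm}$, cf.\ Remark~\ref{conj} and Lemma~\ref{noconj}); the $2k$ maxima are the two conjugation-fixed points $\pm1\in\mathbb{T}$ together with $k-1$ further conjugate pairs, one in each interior gap between consecutive minima. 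Since there are no irreducibles, the Floer complex is generated entirely by the reducibles of the blow-up lying over these $4k$ points; as when $H^0(L)=0$ (and in \cite[Chapter~36]{KM}) each critical point of $f$ contributes one standard tower $\T$, with the $U$-action moving along the eigenvalue ladder of the perturbed Dirac operator. Recording each conjugate pair of towers as $\T^2$ and each conjugation-fixed tower as $\T$, the resulting ``necklace'' of $4k$ towers folds, along the conjugation involution (which fixes $\pm1$), into exactly the zig-zag of the statement: $k$ bottom vertices $\T^2$ (the images of the minima, indexed by $|r_1|<\dots<|r_k|$) and $k+1$ top vertices, the two ends $\T$ being $\pm1$ and the $k-1$ interior ones $\T^2$ the images of the interior maxima.

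Next I would compute the differentials. The $\mathbb{T}$-component of a flow line of $\bar{\mathcal{L}}$ is a downward gradient trajectory of $f$, so only reducibles over $f$-adjacent critical points — a maximum and a neighbouring minimum — can be joined; fixing the unique such Morse trajectory $\gamma$, the reducible flows over $\gamma$ are controlled by the perturbed Dirac operator linearized along $\gamma$, whose spectral flow is that of the family $\{D_B\}_{B\in\gamma}$. Passing the minimum indexed by $|r_i|$ crosses $m_i$ eigenvalues, clockwise or counterclockwise according to whether the corresponding eigenvalue of $\tilde\varphi^*$ lies in the upper or the lower half-plane — precisely the dichotomy of Figure~\ref{specflow}. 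Converting this shift into a map of standard towers yields, for an interior minimum, $\partial^r_i=U^{m_i}$ and $\partial^l_i=\mathrm{id}$ when $r_i>0$, and the reverse when $r_i<0$. At the two ends, the single tower $\T$ over a conjugation-fixed maximum flows to \emph{both} minima of the adjacent conjugate pair — one on each side — so the induced map is the diagonal inclusion $\T\hookrightarrow\T^2$, precomposed with $U^{m_1}$ (resp.\ $U^{m_k}$) on whichever side the $m_1$ (resp.\ $m_k$) eigenvalues are crossed, i.e.\ according to $\mathrm{sgn}(r_1)$ (resp.\ $\mathrm{sgn}(r_k)$): this gives the last two bullets. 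I would then check that no longer trajectories contribute — those wrapping around $\mathbb{T}$ or joining non-adjacent critical points are excluded by the one-dimensional Morse theory of $f$ on the circle together with an energy bound, and those running along the eigenvalue ladder by the standard reducible bookkeeping of \cite{KM} — and finally pin down the grading shifts of the towers, up to one overall shift, by imposing that every differential have degree $-1$; this is consistent precisely because the total spectral flow of $\{D_B\}$ around the loop $\mathbb{T}$ vanishes.

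The hard part is to identify these reducible trajectory spaces, with their orientations, precisely enough to nail the differentials to $\mathrm{id}$, $U^{m_i}$ and diagonal inclusions with the correct signs, rather than merely up to higher order in $U$. In \cite[Chapter~37]{KM} this is done using that on a flat manifold harmonic spinors and $1$-forms are parallel, hence determined by a single point value, which makes the trajectory spaces explicit; that device is unavailable here. Its replacement is the analysis of Section~\ref{perturb}: pulled back to $S^1\times\Sigma$, harmonic spinors have the explicit form $e^{i(n/d)t}\alpha_*$ of Lemmas~\ref{theta} and~\ref{pertdir}, and $\varphi$-equivariance together with Lemma~\ref{noconj} rigidifies the operators linearized along $\gamma$ enough to run the model computation $G$-equivariantly on the cover and push it down to $M_\varphi$. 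Making this transfer precise — in particular verifying transversality of the equivariant reducible trajectory spaces after the final perturbation, and matching them with the model — is the technical core of this last step.
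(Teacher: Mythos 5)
Your proposal follows essentially the same route as the paper's proof: generators are the towers over the critical points of the Morse function on $\mathbb{T}$ (as in \cite[Chapters 36--37]{KM}), and the differentials between towers over adjacent maxima/minima are $U^{k}$ with $k$ the complex spectral flow of the perturbed Dirac operators along the unique Morse trajectory, identified via Lemma \ref{pertdir} (and the dichotomy of Figure \ref{specflow}) with the eigenvalues of $\tilde{\varphi}^*$, with orientations unproblematic since all moduli spaces are reducible. The bookkeeping of conjugate pairs, the diagonal inclusions at the conjugation-fixed maxima $\pm 1$, and the deferral of the remaining trajectory-space analysis to Section \ref{perturb} and \cite{KM} all match the paper's argument.
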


\begin{proof}
The case $H^0(L)=0$ follows as in the computation for $S^1\times S^2$, \cite[Chapter $36$]{KM}. The case $H^0(L)\neq0$ follows from our description of the moduli spaces in the same way as in the computation for flat manifolds in \cite[Chapter $37$]{KM}. The generators of the Floer chain complex are in bijection with the positive eigenvalues of the perturbed Dirac operators of the critical points of the Morse function $f:\mathbb{T}\rightarrow\mathbb{R}$. Consider now two consecutive critical points $p,q$, respectively a maximum and a minimum of $f$; recall that the minimum is one of the connections $B_*\in\mathbb{T}$ for which $D_{B_*}$ has kernel. Given two stable critical points lying over them with relative grading one, there is exactly one point in the moduli space of trajectories connecting them, so the map is given by
\begin{equation*}
\T\stackrel{U^k\cdot}{\longrightarrow} \T
\end{equation*}
where $k$ is the complex spectral flow of the one parameter family of perturbed Dirac operators (and the gradings are shifted so the map has degree $-1$). The description of Lemma \ref{pertdir} identifies the spectral flow with the kernel of the Dirac operator $D_{B_*}$, hence the description in terms of eigenvalues of $\tilde{\varphi}^*$ (notice that there is spectral flow on exactly one of the two trajectories converging to $B_*$). Finally, orientations are readily determined because we work entirely with reducible moduli spaces.
\end{proof}

From this description it is straightforward to compute the corresponding Floer homology groups; the key observation is that when $H^0(L)\neq0$, in degrees high enough the chain complex looks like
\begin{equation}\label{circchain}
\begin{tikzcd}[column sep=tiny]
  \Z \arrow[dr]&&\Z^2 \arrow[dl]\arrow[dr]&&\cdots&&&\Z^2 \arrow[dr] \arrow[dl]&&\Z \arrow[dl]\\
    & \Z^2&&\Z^2&\cdots&&\Z^2&&\Z^2
\end{tikzcd}
\end{equation}
where all the maps are either the identity or the diagonal inclusion; this is exactly the Morse complex of a circle with a function with exactly $2k$ minima, so the homology is that the circle.
Let us discuss some concrete examples, see Figure \ref{examplesfig}. 
\begin{figure}[h]
  \centering
\def\svgwidth{\textwidth}
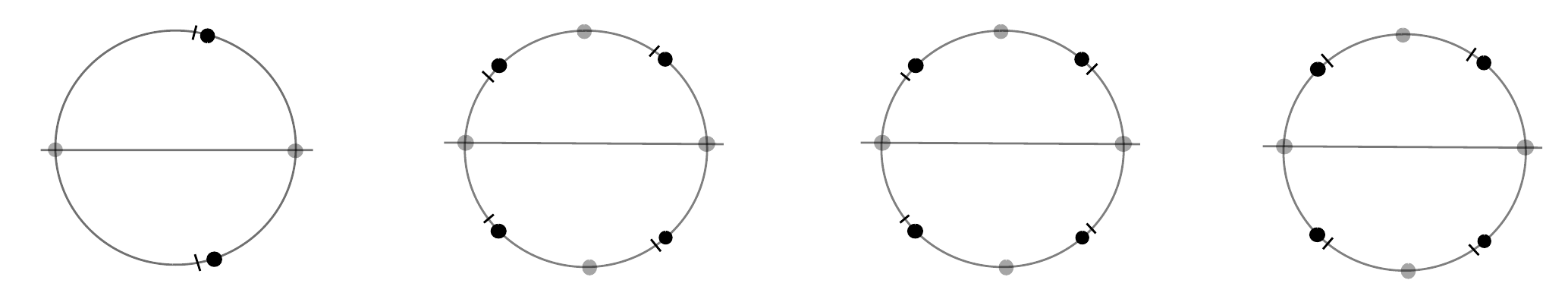
\caption{The pictures, read from left to right, represent the configurations of Examples \ref{ex1}, \ref{ex2}, \ref{ex3}, \ref{ex4}. Here the grey and black dots represent the maxima and minima of the Morse function respectively, and the dashes represents the connections for which the perturbed Dirac equation has kernel.}
\label{examplesfig}
\end{figure} 
\begin{example}\label{ex1}
Suppose there is a single eigenvalue $\lambda_1=e^{\pi i r_1}$ with $r_1>0$ of multiplicity $m_1$. Then, taking into account the gradings, the chain complex is given by
\begin{center}
\begin{tikzcd}
  \T_{-2m_1} \arrow[dr]&&\T\arrow[dl]\\
    & \T^2&
\end{tikzcd}
\end{center} 
and we compute
\begin{equation*}
\HMt_{\bullet}(M_{\varphi},\spin_L)=\T\oplus \T_{1-2m_1}
\end{equation*}
and the action of $\gamma$ is surjective from the second tower onto the first.\end{example}

\begin{remark}
In this example, and all the following ones, the computation for the conjugate spectrum (i.e. changing sign to all the $r_i$) is obtained by reflecting the picture of the chain complex with respect to a vertical line.
\end{remark}

\begin{example}\label{ex2}
Suppose there are two eigenvalues $\lambda_i=e^{\pi i r_i}$ of multiplicity $m_i$ with $r_i>0$. Then the chain complex with gradings is given by

\begin{center}
\begin{tikzcd}
  \T_{-2(m_1+m_2)} \arrow[dr]&&\T^2_{-2m_1}\arrow[dl]\arrow[dr]&&\T\arrow[dl]\\
    & \T^2_{-2m_1}&&\T^2
\end{tikzcd}
\end{center}
and we compute
\begin{equation*}
\HMt_{\bullet}(M_{\varphi},\spin_L)=\T\oplus \T_{1-2(m_1+m_2)}
\end{equation*}
and the action of $\gamma$ is surjective from the second tower onto the first.
\end{example}
\begin{remark}
In general, for homology computations a sequence of consecutive eigenvalues $\lambda_i=e^{\pi i r_i},\dots, \lambda_j=e^{\pi i r_j}$ with $r_i,\dots ,r_j$ of the same sign is equivalent to a single eigenvalue of the same sign and multiplicity $m_i+m_{i+1}+\dots+ m_j$.
\end{remark}

\begin{example}\label{ex3}
Suppose there are two eigenvalues $\lambda_i=e^{\pi i r_i}$ of multiplicity $m_i$ with $r_1<0<r_2$. Then the Floer chain complex is
\begin{center}
\begin{tikzcd}
  \T_{-2m_2} \arrow[dr]&&\T^2\arrow[dl]\arrow[dr]&&\T_{-2m_1}\arrow[dl]\\
    & \T^2&&\T^2
\end{tikzcd}
\end{center}
which has homology
\begin{equation}\label{comp1}
\HMt_{\bullet}(M_{\varphi},\spin_L)=\T\oplus \T_{1-2M}\oplus\mathbb{Z}[U]/U^{m}
\end{equation}
where $M$ and $m$ are respectively the maximum and the minimum of $m_1,m_2$; here the top element of the last summand lies in degree $-1$.
\end{example}

\begin{example}\label{ex4}
Suppose there are two eigenvalues $\lambda_i=e^{\pi i r_i}$ of multiplicity $m_i$ with $r_1>0>r_2$. Then the chain complex is
\begin{center}
\begin{tikzcd}
  \T_{2m_2} \arrow[dr]&&\T^2\arrow[dl]\arrow[dr]&&\T_{2m_1}\arrow[dl]\\
    & \T^2_{2m_2}&&\T^2_{2m_1}
\end{tikzcd}
\end{center}
which has homology
\begin{equation}\label{comp2}
\HMt_{\bullet}(M_{\varphi},\spin_L)=\T\oplus \T_{1-2M}\oplus\mathbb{Z}[U]/U^{m},
\end{equation}
where the bottom elements of the two last summands lie in the same grading. Notice that after changing the orientation we obtain a manifold belonging to Example \ref{ex3}, and indeed (\ref{comp1}) and (\ref{comp2}) are obtained from each other via Poincar\'e duality.
\end{example}

Our description readily adapts to compute the Floer homology with respect to a non-trivial local system. For example, choosing a $1$-cycle $\eta$ with $[\eta]\neq 0\in H^1(M_{\varphi};\mathbb{R})$, we can consider the completed Floer homology $HM_{\bullet}(M_{\varphi},\spin_L;\Gamma_\eta)$ with coefficients in the local system $\Gamma_\eta$ with fiber $\mathbb{R}$; the key point is that in this case the analogue with local coefficients of the chain complex of the circle in Equation $(\ref{circchain})$ has vanishing homology. We obtain:
\begin{itemize}
\item in Examples \ref{ex1} and \ref{ex2}, a cyclic module $\mathbb{R}[U]/U^{m}$ where $m$ is $m_1$ or $m_1+m_2$ respectively;
\item in Example \ref{ex3}, the rank two module $\mathbb{R}[U]/U^{M}\oplus\mathbb{R}[U]/U^{m}$ (the top elements have the same grading);
\item in Example \ref{ex4}, the rank two module $\mathbb{R}[U]/U^{M}\oplus\mathbb{R}[U]/U^{m}$ (the bottom elements have the same grading).
\end{itemize}

With these examples in mind, we are ready to prove the results from the Introduction.

\begin{proof}[Proof of Theorem \ref{thmlocal} and \ref{thmnonlocal}]
Consider the bottom degrees in which the Floer chain complex looks like (\ref{circchain}), and set them to be $0$ and $-1$ respectively. Below this degrees, the Floer chain complex looks like a direct sum of chain complexes of the form
\begin{center}
\begin{tikzcd}[column sep=tiny]
  \Z \arrow[dr]&&\Z \arrow[dl]\arrow[dr]&&\cdots&&&\Z \arrow[dr] \arrow[dl]&&\Z \arrow[dl]\\
    & \Z&&\Z&\cdots&&\Z&&\Z
\end{tikzcd}
\end{center} 
where all maps are isomorphisms, and the top row lies in even absolute $\Z_2$-grading; this has homology $\Z$ lying in the top grading represented by a signed sum of all generators in the top row. In particular the bottom of the tower in homology whose all elements are in odd grading has degree $-1$. The bottom of other the tower is then readily seen to be exactly the lowest grading among the towers corresponding to a maximum of the Morse function, which is exactly the quantity $-2\Delta_L$ in the statement of theorem  \ref{thmnonlocal}.
\par
Furthermore, the conjugation action induces an involution on the set of these chain subcomplexes; such actions has two kind of orbits:
\begin{enumerate}
\item pairs of complexes which are exchanged by the action; these are exactly those that do not involve critical points over the maxima at $\pm1\in\mathbb{T}$;
\item complexes which are sent into themselves by the action; in this case the central copy of $\Z$ lies over $\pm1$.
\end{enumerate}
Looking at the homology with local coefficents as in Theorem \ref{thmlocal}, each such subcomplex contributes a copy of $\mathbb{R}$ to the homology. Furthermore, each pair of `endpoints' of such complexes of contributes an extra dimension to $H^0(L)$ via a spectral flow consideration. This proves that the rank $HM_{\bullet}(M_{\varphi},\spin_L;\Gamma_\eta)$ is exactly $h^0(L)$. Finally, the statements about the ranks as $U$-modules follow from analogous arguments.
\end{proof}

\vspace{0.5cm}
\section{Computations}\label{comp}
In this final section we discuss concrete computations of Floer homology that can be performed via our results by directly computing the action of the automorphism $\varphi$ on the space of holomorphic sections.
\\
\par
We begin with the case of an automorphism $\varphi$ of order $2$, which corresponds to the case of hyperelliptic surfaces; such an automorphism leaves all spin structures invariant. By Lemma \ref{liftorder} (see also Example \ref{torus}), the lift $\tilde{\varphi}$ has order $4$, and (after possibly taking the opposite lift) it acts on the space of harmonic spinors as multiplication by $i$. Via Example \ref{ex1}, Proposition \ref{order2} is then a direct consequence of the computation of the $h^0(L)$ for the corresponding linear systems, which can be found in \cite[Appendix B.$3$]{ACGH} and \cite{BS}; the statement is the following. 
\begin{prop}Consider a hyperelliptic Riemann surface of genus $g$ with hyperelliptic involution $\varphi$, and $[\eta]\neq0$. Among the $2^{2g}$ theta characteristics $L$:
\begin{itemize}
\item exactly $\binom{2g+1}{g}$ have $h^0(L)=0$;
\item for any $1\leq w\leq g$ odd, there are exactly $\binom{2g+2}{g-w}$ theta characteristics $L$ with $h^0(L)=(w+1)/2$.
\end{itemize}
\end{prop}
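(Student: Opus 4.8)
The statement is purely a matter of the holomorphic geometry of a hyperelliptic curve, so the plan is to reduce it, via the classical dictionary for theta characteristics, to a combinatorial count of subsets of the branch locus. First I would fix the double cover $h\colon \Sigma\to\mathbb{P}^1$ and let $B\subset\Sigma$ be the set of its $2g+2$ Weierstrass points, with $H$ the hyperelliptic class, so that $K\cong(g-1)H$ and $2p\sim H$ for every $p\in B$. Since the classes $[p-q]$ with $p,q\in B$ generate $\mathrm{Jac}(\Sigma)[2]$ subject only to $\sum_{p\in B}p\sim(g+1)H$, one has the standard bijection between theta characteristics $L$ and subsets $T\subseteq B$ with $|T|\equiv g+1\pmod 2$, taken modulo the free involution $T\mapsto B\setminus T$; this already accounts for the count $\tfrac12\cdot 2^{2g+1}=2^{2g}$. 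Concretely $L_T\cong\mathcal{O}\big(\sum_{p\in T}p\big)\otimes nH$ with $n=(g-1-|T|)/2$ when $|T|\le g-1$, and $L_{B\setminus T}\cong L_T$ in general.

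The geometric heart of the argument, and the step I expect to be the main obstacle in a self-contained treatment, is the identity
$$h^0(L_T)=\tfrac12\big|\,(g+1)-|T|\,\big|.$$
Here I would use the structure theory of special linear series on a hyperelliptic curve: whenever $h^0(L_T)>0$ the system $|L_T|$ is special (because $K-L_T\cong L_T$), hence composed with the pencil $|H|$, and since a sum of distinct Weierstrass points contains no copy of $H$ and is rigid, the base locus of $\big|\sum_{p\in T}p+nH\big|$ (for $|T|\le g-1$) is exactly $\sum_{p\in T}p$, with moving part $|nH|$, giving $h^0=n+1=\tfrac{(g+1)-|T|}{2}$; the case $|T|=g+1$ gives $h^0=0$, and the range $|T|\ge g+3$ follows by passing to $B\setminus T$. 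In practice, as the text indicates, this computation of the relevant linear systems is classical and appears in \cite[Appendix B.3]{ACGH} and \cite{BS}, so one would simply cite it.

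Finally I would read off the counts. The formula gives $h^0(L_T)=0$ iff $|T|=g+1$, and modulo complementation the number of such subsets is $\tfrac12\binom{2g+2}{g+1}=\binom{2g+1}{g}$. For $w\ge1$ one has $h^0(L_T)=(w+1)/2$ iff $\big|(g+1)-|T|\big|=w+1$, i.e. $|T|\in\{\,g-w,\ g+w+2\,\}$; these two sizes are interchanged by complementation, the parity requirement $|T|\equiv g+1\pmod 2$ translates into $w$ being odd, and $g-w\ge0$ forces $w\le g$, so modulo complementation there are $\binom{2g+2}{g-w}$ such theta characteristics. A short check that $\binom{2g+1}{g}+\sum_{w\ \mathrm{odd},\,1\le w\le g}\binom{2g+2}{g-w}=2^{2g}$ — which follows from $\sum_{m\equiv g+1\,(2)}\binom{2g+2}{m}=2^{2g+1}$ together with the symmetry $m\mapsto 2g+2-m$ that isolates the central term $\binom{2g+2}{g+1}$ — confirms that every theta characteristic has been accounted for, completing the proof.
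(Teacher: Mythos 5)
Your proof is correct and takes essentially the same route as the paper, which simply invokes the classical parametrization of the $2^{2g}$ theta characteristics of a hyperelliptic curve by subsets of the $2g+2$ Weierstrass points (modulo complementation) together with the identity $h^0(L_T)=\tfrac12\bigl|(g+1)-|T|\bigr|$, citing \cite[Appendix B.3]{ACGH} and \cite{BS}. Your write-up merely makes that description and the resulting binomial bookkeeping explicit.
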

The proof involves an explicit description of $2^{2g}$ divisors corresponding to the square roots of the canonical bundle in terms of the $2g+2$ fixed points of the hyperelliptic involution.
\\
\par
In light of our results, for a general automorphisms of order $\geq3$ the Floer homology groups will depend not only on the dimension of ${H}^0(L)$ but also on the spectrum of the action of $\tilde{\varphi}^*$. While it is in general quite challenging to compute such data, we can use the Atiyah-Bott $G$-spin theorem \cite{AB} to obtain useful information about it. We start by briefly recalling the geometric setup, which is more natural in the context of the spaces of harmonic spinors $\mathcal{H}^{\pm}$. 
Given a finite order spin automorphism $\varphi$ (with lift $\tilde{\varphi}$) of a $2m$-dimensional spin manifold, we obtain a natural action $\tilde{\varphi}^*$ on the spaces $\mathcal{H}^{\pm}$ of harmonic spinors. Its character is then defined to be
\begin{equation*}
\chi(\tilde{\varphi}^*)=\mathrm{tr}(\tilde{\varphi}^*\lvert_{\mathcal{H}^+})-\mathrm{tr}(\tilde{\varphi}^*\lvert_{\mathcal{H}^-})\in\mathbb{C}.
\end{equation*}
Of course $\chi(\mathrm{id}^*)$ is simply the index of $D^+$, which can be computed using the Atiyah-Singer index theorem. A generalization of this, known as the $G$-spin theorem \cite{AB}, allows to compute $\chi(\tilde{\varphi}^*)$ in terms of the fixed locus of $\varphi$ and the action of $d\varphi$ on the normal bundle of the fixed locus. In general, such a formula is very complicated to use in concrete situations because the local contributions come with a sign (where the lift $\tilde{\varphi}$ comes into play) that is hard to determine. Luckily, in \cite{RRT} the authors worked out exactly the sign contribution in the case of a hyperbolic $2m$-manifold for automorphisms that lift to spin automorphisms of the same order. Via Lemma \ref{liftorder}, we can use this to study a spin action of odd order on a compact Riemann surface. The result is the following, where we consider the unique hyperbolic metric inducing the given conformal structure (notice that uniqueness implies that the metric is $\varphi$-invariant).
\begin{prop}[Theorem $8.1$ of \cite{RRT}]
Consider an automorphism $\varphi$ of odd order $d$ acting on a spin Riemann surface, and consider the lift $\tilde{\varphi}$ with the same order. Suppose that $\varphi$ has fixed points $p_1,\dots,p_m$, and the action of $d\varphi_{p_i}$ on $T_{p_i}$ is given by rotation by $\theta_i$, where $0<|\theta_i|<\pi$. Then
\begin{equation*}
\chi(\tilde{\varphi}^*)=i\sum_i\cos(\tilde{\theta}_i)\csc(\theta_i)=\frac{i}{2}\sum_i\csc(\tilde{\theta}_i),
\end{equation*}
where we denote by $\tilde{\theta}_i$ the square root of $\theta_i$ for which $d\cdot\tilde{\theta}_i\in2\pi\mathbb{Z}$.
\end{prop}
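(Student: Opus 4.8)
The plan is to derive the formula directly from the Atiyah–Bott $G$-spin fixed point theorem, using the result of \cite{RRT} to pin down the sign ambiguity inherent in the lift $\tilde{\varphi}$. First I would recall the general shape of the $G$-spin theorem for an isolated fixed point $p$ of a spin automorphism of a surface: the local contribution to $\chi(\tilde{\varphi}^*)$ is, up to a universal constant, $\nu_p \cdot \big(2\sin(\theta_p/2)\big)^{-1}$, where $\theta_p$ is the rotation angle of $d\varphi_p$ on $T_p\Sigma$ and $\nu_p=\pm1$ records the action of $\tilde{\varphi}$ on the spinor line at $p$. Since all fixed points are isolated (because $\Sigma/\varphi=\mathbb{P}^1$ and $\varphi$ has prime, hence cyclic, odd order), the theorem gives $\chi(\tilde{\varphi}^*)$ as a sum of such local terms, and the problem reduces to evaluating the normalizing constant and identifying $\nu_p$.

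The identification of $\nu_p$ is precisely the content of Theorem $8.1$ of \cite{RRT}, applied to the unique hyperbolic metric compatible with the conformal structure (this metric is $\varphi$-invariant by uniqueness, so the fixed-point data is well defined, and Lemma \ref{liftorder} guarantees that in odd order there is a lift of the same order $d$ to feed into their hypotheses). Their result says that the correct local spinorial sign is the one for which the chosen square root $\tilde{\theta}_p$ of $\theta_p$ satisfies $d\cdot\tilde{\theta}_p\in 2\pi\mathbb{Z}$; concretely, replacing $\sin(\theta_p/2)$ by $\sin(\tilde{\theta}_p)$ absorbs $\nu_p$ and the half-angle into a single well-defined quantity. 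So the second step is to quote \cite[Theorem $8.1$]{RRT} essentially verbatim to obtain
\begin{equation*}
\chi(\tilde{\varphi}^*)=\frac{i}{2}\sum_i\csc(\tilde{\theta}_i).
\end{equation*}
The remaining equality $\frac{i}{2}\csc(\tilde{\theta}_i)=i\cos(\tilde{\theta}_i)\csc(\theta_i)$ is then a one-line trigonometric identity: since $\theta_i=2\tilde{\theta}_i \pmod{2\pi}$, we have $\sin(\theta_i)=2\sin(\tilde{\theta}_i)\cos(\tilde{\theta}_i)$, so $\cos(\tilde{\theta}_i)\csc(\theta_i)=\cos(\tilde{\theta}_i)/(2\sin(\tilde{\theta}_i)\cos(\tilde{\theta}_i))=\tfrac12\csc(\tilde{\theta}_i)$, which closes the chain of equalities in the statement.

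I expect the main obstacle to be purely bookkeeping rather than conceptual: matching the normalization conventions of \cite{AB} (and of \cite{RRT}) for the spinor bundles, the Clifford action, and the orientation of the rotation angles with the conventions fixed in Section \ref{prelim} — in particular the identification $S^+=L$, $S^-=\bar{L}$, the factor $\sqrt 2$ in \eqref{dirackahler}, and the sign/orientation with which $d\varphi_p$ acts — so that the overall constant comes out to exactly $i/2$ and the character is $\mathrm{tr}(\tilde\varphi^*|_{\mathcal H^+})-\mathrm{tr}(\tilde\varphi^*|_{\mathcal H^-})$ with the signs as written. One should also double-check that the hypothesis $0<|\theta_i|<\pi$ is automatically satisfied here: it is, because a fixed point of $\varphi$ on which $d\varphi_p$ acted by $\pm1$ would force $\varphi$ to be the identity near $p$ (in prime order, $d\varphi_p$ has order dividing $d$ and is a primitive $d$-th root of unity at each fixed point since the quotient is smooth), so no half-angle degeneracy occurs and $\csc(\tilde\theta_i)$ is finite. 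With the conventions aligned, the proof is simply: invoke the $G$-spin theorem, substitute the sign computation of \cite{RRT}, and simplify.
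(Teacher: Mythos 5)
Your proposal matches the paper's treatment: the statement is taken essentially verbatim from \cite[Theorem 8.1]{RRT} (applied with the $\varphi$-invariant hyperbolic metric and the odd-order lift supplied by Lemma \ref{liftorder}), with the only additional content being the half-angle identity $\cos(\tilde{\theta}_i)\csc(\theta_i)=\tfrac12\csc(\tilde{\theta}_i)$, which you verify correctly. One small remark: your justification that the angles are nondegenerate needlessly invokes $\Sigma/\varphi=\mathbb{P}^1$ and primality, neither of which is assumed here; isolatedness of fixed points and $d\varphi_p\neq\pm1$ follow already from $\varphi$ being a nontrivial finite automorphism of odd order.
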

\begin{remark}
The formula implies that $\chi(\tilde{\varphi}^*)\in i\mathbb{R}$; this is consistent with the fact that on a surface the actions of $\tilde{\varphi}^*$ on $\mathcal{H}^{\pm}$ are dual to each other, cf. Remark \ref{conj}.
\end{remark}
\begin{remark}
Notice that the theorem does not assume that $\Sigma/\varphi=\mathbb{P}^1$. In general, given a $\varphi$-invariant spin structure it is already an interesting question to determine whether such spin structure is even or odd (cf. \cite{EOP} for the case in which the quotient is an elliptic curve).
\end{remark}

We will now show how to use this theorem to compute the Floer homology for all automorphism of prime odd order; we begin by proving Proposition \ref{order3} and \ref{order5}.

\begin{proof}[Proof of Proposition \ref{order3}]
If the rotation angle at a fixed point is $\theta=e^{\pm 2\pi i/3}$, the corresponding half-angle is given by $\tilde{\theta}=e^{\mp 2\pi i/3}$; as $\csc(\pm2\pi/3)=\pm2\sqrt{3}/3$, the character of $\tilde{\varphi}^*$ is
\begin{equation*}
\chi(\tilde{\varphi}^*)=\frac{i\sqrt{3}}{3}(n_--n_+).
\end{equation*}
By Lemma \ref{noconj}, $\varphi$ acts on $H^0(L)$ as multiplication by either $e^{\pm 2\pi i/3}$, in which case the character of $\varphi$ is 
\begin{equation*}
\pm(e^{ 2\pi i/3}-e^{- 2\pi i/3})\cdot h^0(L)=\pm i\sqrt{3}\cdot h^0(L).
\end{equation*}
So $H^0(L)$ has dimension $|n_+-n_-|/3$, and the conclusion follows from the computation in Example \ref{ex1}. 
\end{proof}

\begin{proof}[Proof of Proposition \ref{order5}]
In this case the character $\chi(\tilde{\varphi}^*)$ will be an integral combination of
\begin{align*}
e^{2\pi i/5}-e^{-2\pi i/5}=2i\sin(\frac{2\pi}{5})&=i\sqrt{\frac{5+\sqrt{5}}{2}}\\
e^{4\pi i/5}-e^{-4\pi i/5}=2i\sin(\frac{4\pi}{5})&=i\sqrt{\frac{5-\sqrt{5}}{2}}.
\end{align*}
As these are linearly independent over $\mathbb{Q}$, and the spectrum of $\tilde{\varphi}^*$ contains only one of in each pair $e^{\pm 2\pi i/5},e^{\pm 4\pi i/5}$ by Lemma \ref{noconj}, the spectrum is determined uniquely by $\chi(\tilde{\varphi}^*)$. We compute
\begin{align*}
\csc(\widetilde{2\pi/5})/2&=-\frac{1}{5}\cdot2\sin(2\pi/5)-\frac{2}{5}\cdot2\sin(4\pi/5)\\
\csc(\widetilde{4\pi/5})/2&=\frac{2}{5}\cdot2\sin(2\pi/5)-\frac{1}{5}\cdot2\sin(4\pi/5),
\end{align*}
where $\widetilde{2\pi/5}=6\pi/5$ and $\widetilde{4\pi/5}=2\pi/5$. Hence, if the automorphism has $p$ fixed points of rotation $2\pi/5$ and $q$ fixed points of rotation $4\pi/5$ (where we count with signs), then the character is given by
\begin{equation*}
\chi(\tilde{\varphi}^*)=i\left(\frac{-p+2q}{5}\cdot2\sin(\frac{2\pi}{5})+\frac{-2p-q}{5}\cdot2\sin(\frac{4\pi}{5}) \right).
\end{equation*}
We can construct examples in which the action of $\tilde{\varphi}^*$ on $\mathcal{H}^+$ has eigenvalues $e^{2\pi i/5}$ and $e^{4\pi i/5}$ with the desired multiplicities $m,n\in\mathbb{Z}$ (where negative multiplicity corresponds to the conjugate eigenvalue) by taking
\begin{equation*}
p=-m-2n\quad\text{and}\quad q=2m-n.
\end{equation*}
We can write down such automorphism explictly as the natural $\mathbb{Z}/5$-action on a curve of the form $y^5=f(x)$ for a suitable polynomial $f$, following \cite[Chapter III.2]{Mir}. Here, near a root of $f$ of multiplicity $a$ the local model of the singularity of the curve is $z^5=w^a$, and the local rotation is given by $b\cdot 2\pi/5$ where $b$ is the inverse of $a$ modulo $5$. The conclusion of the Proposition follows from the computations in Example \ref{ex3} and \ref{ex4}.
\end{proof}

To conclude, we discuss the proof of Theorem \ref{compprime}, whose proof is a generalization of the method we used in the examples above. The main observation is the following.

\begin{lemma}
Let $p$ an odd prime, and let $\zeta=e^{2\pi i/p}$. Then the numbers $\zeta^i-\zeta^{-i}$ for $i=1,\dots, (p-1)/2$ are linearly independent over $\mathbb{Q}$.
\end{lemma}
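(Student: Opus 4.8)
The statement is a classical fact about cyclotomic fields, and the natural approach is to work inside the field $\mathbb{Q}(\zeta)$, where $\zeta = e^{2\pi i/p}$, which has degree $p-1$ over $\mathbb{Q}$. The numbers $\zeta^i - \zeta^{-i} = 2i\sin(2\pi i/p)$ for $i = 1,\dots,(p-1)/2$ are, up to the common factor $2i$, the quantities $\sin(2\pi i/p)$; equivalently they span (over $\mathbb{Q}$) the same space as the ``imaginary part'' combinations $\zeta^i - \zeta^{-i}$ inside $\mathbb{Q}(\zeta)$. The cleanest route is to show that the $p-1$ numbers $\zeta^i - \zeta^{-i}$ for $i = 1,\dots,(p-1)/2$ together with $\zeta^i + \zeta^{-i}$ for $i = 0,\dots,(p-1)/2$ (which is $(p-1)/2$ plus $(p+1)/2$ elements, i.e. $p$ elements — one too many) span $\mathbb{Q}(\zeta)$ after discarding one redundant relation, and then extract linear independence of the sine part from a dimension count.

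A more direct and self-contained route, which I would actually carry out, is the following. First I would recall that $1, \zeta, \zeta^2, \dots, \zeta^{p-2}$ is a $\mathbb{Q}$-basis of $\mathbb{Q}(\zeta)$, and that $\zeta^{p-1} = -(1 + \zeta + \dots + \zeta^{p-2})$. Suppose a rational relation $\sum_{i=1}^{(p-1)/2} c_i (\zeta^i - \zeta^{-i}) = 0$ holds with $c_i \in \mathbb{Q}$. Rewriting $\zeta^{-i} = \zeta^{p-i}$, this becomes $\sum_{i=1}^{(p-1)/2} c_i \zeta^i - \sum_{i=1}^{(p-1)/2} c_i \zeta^{p-i} = 0$, i.e. a relation among the powers $\zeta^1, \dots, \zeta^{p-1}$ with coefficient $c_i$ on $\zeta^i$ and $-c_i$ on $\zeta^{p-i}$ for $i \le (p-1)/2$ (note the two index ranges $\{1,\dots,(p-1)/2\}$ and $\{(p+1)/2,\dots,p-1\}$ are disjoint and together exhaust $\{1,\dots,p-1\}$, so no power is hit twice). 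Thus we have an expression $\sum_{j=1}^{p-1} d_j \zeta^j = 0$ where $d_j = c_j$ for $j \le (p-1)/2$ and $d_j = -c_{p-j}$ for $j \ge (p+1)/2$. The only $\mathbb{Q}$-linear relation among $\zeta^1,\dots,\zeta^{p-1}$ (equivalently, among $\zeta^0,\dots,\zeta^{p-1}$, since $\zeta^0=1$ and we can reintroduce it) is a scalar multiple of $1 + \zeta + \dots + \zeta^{p-1} = 0$; but our relation has zero coefficient on $\zeta^0 = 1$, forcing that scalar to be $0$, hence all $d_j = 0$, hence all $c_i = 0$.

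The one point requiring a touch of care — and the only place a subtlety could hide — is the claim that the minimal polynomial of $\zeta$ over $\mathbb{Q}$ is the $p$-th cyclotomic polynomial $\Phi_p(x) = 1 + x + \dots + x^{p-1}$, so that the unique (up to scalar) $\mathbb{Q}$-linear dependence among $1, \zeta, \dots, \zeta^{p-1}$ is the obvious one. This is standard (irreducibility of $\Phi_p$ over $\mathbb{Q}$ for $p$ prime, e.g. via Eisenstein after the substitution $x \mapsto x+1$), and I would simply cite it. Given that, the argument above is purely bookkeeping: the key structural observation is that the index sets $\{1,\dots,(p-1)/2\}$ and $\{(p+1)/2,\dots,p-1\}$ partition $\{1,\dots,p-1\}$, so the ``sine'' combinations $\zeta^i - \zeta^{-i}$ spread out over distinct basis powers with no collision on the constant term, and the unique cyclotomic relation — which does involve the constant term nontrivially — cannot be invoked. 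I expect no genuine obstacle here; the main thing is to set up the index arithmetic cleanly so the disjointness of the two ranges (which uses $p$ odd) is transparent.
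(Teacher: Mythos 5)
Your proposal is correct and follows essentially the same route as the paper: rewrite $\zeta^{-i}=\zeta^{p-i}$, observe the resulting relation among $\zeta^1,\dots,\zeta^{p-1}$ has no constant term, and conclude from the fact that the minimal polynomial of $\zeta$ is $1+x+\dots+x^{p-1}$ of degree $p-1$ that all coefficients vanish. Your write-up just phrases the last step via the one-dimensionality of the relation space spanned by $1+\zeta+\dots+\zeta^{p-1}=0$, which is the same fact the paper invokes.
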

\begin{proof}
Suppose there is a non-trivial relation with rational coefficients $a_1,\dots, a_{p-1/2}$. Then
\begin{equation*}
a_1(\zeta-\zeta^{-1})+a_2(\zeta^2-\zeta^{-2})+\dots+a_{(p-1)/2}(\zeta^{(p-1)/2}-\zeta^{-(p-1)/2})=0.
\end{equation*}
Because there is no constant term, this reduces to a polynomial of degree $<p-1$ satisfied by $\zeta$. This is a contradiction because $\zeta$ has minimal polynomial $1+\zeta+\zeta^2+\dots +\zeta^{p-1}$ over $\mathbb{Q}$.
\end{proof}

\begin{proof}[Proof of Theorem \ref{compprime}]
Given an automorphism of odd prime order $p$, we can use the $G$-spin theorem to compute $\chi(\tilde{\varphi}^*)$ in terms of ramification data. Now, $\chi(\tilde{\varphi}^*)$ is an integral linear combination of $\zeta^i-\zeta^{-i}$ for $i=1,\dots (p-1)/2$, and by the previous lemma the coefficients are uniquely determined. Furthermore, no conjugate pairs $\zeta^i,\zeta^{-i}$ appear in the spectrum, so we obtain that $\chi(\tilde{\varphi}^*)$ determines completely the spectrum of the action of $\tilde{\varphi}^*$ on $H^0(L)$; we can then use this to determine the Floer chain complex and its homology as in Section \ref{chain}.
\end{proof}

 \vspace{1cm}

\bibliographystyle{alpha}
\bibliography{biblio}

\begin{thebibliography}{KMOS07}

\bibitem[AB68]{AB}
M.~F. Atiyah and R.~Bott.
\newblock A {L}efschetz fixed point formula for elliptic complexes. {II}.
  {A}pplications.
\newblock {\em Ann. of Math. (2)}, 88:451--491, 1968.

\bibitem[ACGH85]{ACGH}
E.~Arbarello, M.~Cornalba, P.~A. Griffiths, and J.~Harris.
\newblock {\em Geometry of algebraic curves. {V}ol. {I}}, volume 267 of {\em
  Grundlehren der Mathematischen Wissenschaften [Fundamental Principles of
  Mathematical Sciences]}.
\newblock Springer-Verlag, New York, 1985.

\bibitem[Ati71]{Ati}
Michael~F. Atiyah.
\newblock Riemann surfaces and spin structures.
\newblock {\em Ann. Sci. \'{E}cole Norm. Sup. (4)}, 4:47--62, 1971.

\bibitem[Bru96]{Bru}
Rogier Brussee.
\newblock The canonical class and the {$C^\infty$} properties of {K}\"{a}hler
  surfaces.
\newblock {\em New York J. Math.}, 2:103--146, 1996.

\bibitem[BS92]{BS}
Christian B\"{a}r and Paul Schmutz.
\newblock Harmonic spinors on {R}iemann surfaces.
\newblock {\em Ann. Global Anal. Geom.}, 10(3):263--273, 1992.

\bibitem[CGH11]{CGH}
Vincent Colin, Paolo Ghiggini, and Ko~Honda.
\newblock Equivalence of {H}eegaard {F}loer homology and embedded contact
  homology via open book decompositions.
\newblock {\em Proc. Natl. Acad. Sci. USA}, 108(20):8100--8105, 2011.

\bibitem[Dol12]{Dol}
Igor~V. Dolgachev.
\newblock {\em Classical algebraic geometry}.
\newblock Cambridge University Press, Cambridge, 2012.
\newblock A modern view.

\bibitem[EOP08]{EOP}
Alex Eskin, Andrei Okounkov, and Rahul Pandharipande.
\newblock The theta characteristic of a branched covering.
\newblock {\em Adv. Math.}, 217(3):873--888, 2008.

\bibitem[FM99]{FM}
Robert Friedman and John~W. Morgan.
\newblock Obstruction bundles, semiregularity, and {S}eiberg-{W}itten
  invariants.
\newblock {\em Comm. Anal. Geom.}, 7(3):451--495, 1999.

\bibitem[FZ11]{FZ}
Hershel~M. Farkas and Shaul Zemel.
\newblock {\em Generalizations of {T}homae's formula for {$Z_n$} curves},
  volume~21 of {\em Developments in Mathematics}.
\newblock Springer, New York, 2011.

\bibitem[Hit74]{Hit}
Nigel Hitchin.
\newblock Harmonic spinors.
\newblock {\em Advances in Math.}, 14:1--55, 1974.

\bibitem[HS05]{HS}
Michael Hutchings and Michael Sullivan.
\newblock The periodic {F}loer homology of a {D}ehn twist.
\newblock {\em Algebr. Geom. Topol.}, 5:301--354, 2005.

\bibitem[Huy05]{Huy}
Daniel Huybrechts.
\newblock {\em Complex geometry}.
\newblock Universitext. Springer-Verlag, Berlin, 2005.
\newblock An introduction.

\bibitem[IY06]{IY}
Mitsuhiro Itoh and Takahisa Yamase.
\newblock The dual {T}hurston norm and the geometry of closed 3-manifolds.
\newblock {\em Osaka J. Math.}, 43(1):121--129, 2006.

\bibitem[JM08]{JM}
Stanislav Jabuka and Thomas~E. Mark.
\newblock On the {H}eegaard {F}loer homology of a surface times a circle.
\newblock {\em Adv. Math.}, 218(3):728--761, 2008.

\bibitem[KLT20]{KLT}
\c{C}a\u{g}atay Kutluhan, Yi-Jen Lee, and Clifford~Henry Taubes.
\newblock {$\rm HF{=}HM$}, {I}: {H}eegaard {F}loer homology and
  {S}eiberg-{W}itten {F}loer homology.
\newblock {\em Geom. Topol.}, 24(6):2829--2854, 2020.

\bibitem[KM07]{KM}
Peter Kronheimer and Tomasz Mrowka.
\newblock {\em Monopoles and three-manifolds}, volume~10 of {\em New
  Mathematical Monographs}.
\newblock Cambridge University Press, Cambridge, 2007.

\bibitem[KMOS07]{KMOS}
P.~Kronheimer, T.~Mrowka, P.~Ozsv{\'a}th, and Z.~Szab{\'o}.
\newblock Monopoles and lens space surgeries.
\newblock {\em Ann. of Math. (2)}, 165(2):457--546, 2007.

\bibitem[KS10]{KS}
Sadok Kallel and Denis Sjerve.
\newblock Invariant spin structures on {R}iemann surfaces.
\newblock {\em Ann. Fac. Sci. Toulouse Math. (6)}, 19(3-4):457--477, 2010.

\bibitem[Lop10]{Lop}
William~Manuel Lopes.
\newblock {\em The {S}eiberg-{W}itten equations on a surface times a circle}.
\newblock ProQuest LLC, Ann Arbor, MI, 2010.
\newblock Thesis (Ph.D.)--Massachusetts Institute of Technology.

\bibitem[LT12]{LC}
Yi-Jen Lee and Clifford~Henry Taubes.
\newblock Periodic {F}loer homology and {S}eiberg-{W}itten-{F}loer cohomology.
\newblock {\em J. Symplectic Geom.}, 10(1):81--164, 2012.

\bibitem[Mir95]{Mir}
Rick Miranda.
\newblock {\em Algebraic curves and {R}iemann surfaces}, volume~5 of {\em
  Graduate Studies in Mathematics}.
\newblock American Mathematical Society, Providence, RI, 1995.

\bibitem[Mor96]{Mor}
John~W. Morgan.
\newblock {\em The {S}eiberg-{W}itten equations and applications to the
  topology of smooth four-manifolds}, volume~44 of {\em Mathematical Notes}.
\newblock Princeton University Press, Princeton, NJ, 1996.

\bibitem[MOY97]{MOY}
Tomasz Mrowka, Peter Ozsv{\'a}th, and Baozhen Yu.
\newblock Seiberg-{W}itten monopoles on {S}eifert fibered spaces.
\newblock {\em Comm. Anal. Geom.}, 5(4):685--791, 1997.

\bibitem[MT96]{MT}
Guowu Meng and Clifford~Henry Taubes.
\newblock {$\underline{\rm SW}=$} {M}ilnor torsion.
\newblock {\em Math. Res. Lett.}, 3(5):661--674, 1996.

\bibitem[Mum71]{Mum}
David Mumford.
\newblock Theta characteristics of an algebraic curve.
\newblock {\em Ann. Sci. \'{E}cole Norm. Sup. (4)}, 4:181--192, 1971.

\bibitem[Nic03]{Nic}
Liviu~I. Nicolaescu.
\newblock {\em The {R}eidemeister torsion of 3-manifolds}, volume~30 of {\em De
  Gruyter Studies in Mathematics}.
\newblock Walter de Gruyter \& Co., Berlin, 2003.

\bibitem[OS03]{OSd}
Peter Ozsv{\'a}th and Zolt{\'a}n Szab{\'o}.
\newblock Absolutely graded {F}loer homologies and intersection forms for
  four-manifolds with boundary.
\newblock {\em Adv. Math.}, 173(2):179--261, 2003.

\bibitem[Roe98]{Roe}
John Roe.
\newblock {\em Elliptic operators, topology and asymptotic methods}, volume 395
  of {\em Pitman Research Notes in Mathematics Series}.
\newblock Longman, Harlow, second edition, 1998.

\bibitem[RRT21]{RRT}
John~G. Ratcliffe, Daniel Ruberman, and Steven~T. Tschantz.
\newblock Harmonic spinors on the {D}avis hyperbolic 4-manifold.
\newblock {\em J. Topol. Anal.}, 13(3):699--737, 2021.

\bibitem[Sha78]{Sha}
Patrick Shanahan.
\newblock {\em The {A}tiyah-{S}inger index theorem}, volume 638 of {\em Lecture
  Notes in Mathematics}.
\newblock Springer, Berlin, 1978.
\newblock An introduction.

\bibitem[Tur98]{Tur}
Vladimir Turaev.
\newblock A combinatorial formulation for the {S}eiberg-{W}itten invariants of
  {$3$}-manifolds.
\newblock {\em Math. Res. Lett.}, 5(5):583--598, 1998.

\bibitem[Tur02]{TurBook}
Vladimir Turaev.
\newblock {\em Torsions of {$3$}-dimensional manifolds}, volume 208 of {\em
  Progress in Mathematics}.
\newblock Birkh\"{a}user Verlag, Basel, 2002.

\end{thebibliography}

\end{document}